\documentclass{article}
\RequirePackage[OT1]{fontenc}
\usepackage[dvipsnames]{xcolor}
\RequirePackage{amsthm,amsmath,amsbsy,amssymb,array,bm,graphicx,epstopdf,mathtools,calc,xfrac,enumitem,microtype}
\RequirePackage[numbers,sort&compress]{natbib}
\RequirePackage{hyperref}
\hypersetup{colorlinks=true,citecolor=MidnightBlue,urlcolor=MidnightBlue,linkcolor=PineGreen,breaklinks} 
\usepackage[ruled,linesnumbered]{algorithm2e}
\SetKwInput{KwInput}{Input}
\SetKwInput{KwOutput}{Output}
\usepackage{subfigure,subdepth,mdframed,tikz,pgf,pgffor}
\usepgfmodule{shapes,plot}
\usetikzlibrary{decorations,arrows,positioning,calc}
\xdefinecolor{darkgreen}{RGB}{0,193,0}
\def\dgrn{darkgreen}
\newlength{\mylength}
\setlength{\mylength}{1.5\linewidth}
\allowdisplaybreaks

\tikzstyle{dir}= [postaction={decorate,
	decoration={markings,
	mark=at position .65
	with {\arrow[scale=1]{stealth}}}}]
\tikzstyle{dirs}= [postaction={decorate,
	decoration={markings,
	mark=at position .65
	with {\arrow[scale=.8]{stealth}}}}]
\tikzstyle{nd} = [circle, fill=black,
	inner sep=0pt,
	minimum width=3 pt]
\tikzstyle{bnd} = [circle,fill=black,
	inner sep=1pt,
	minimum width=6 pt,
	text=white]
\tikzstyle{rnd} = [circle, fill=black!30,
	inner sep=0pt,
	minimum width=3 pt]
\tikzstyle{brnd} = [circle,fill=black!30,
	inner sep=1pt,
	minimum width=6 pt,
	text=black]


\newcommand{\TexttriangleR}{%
\raisebox{-0.45mm}{\!\!
\tikz[scale=.2,clip]{\draw[thick]
(210:.6) node[rnd] {}--
(  90:.6) node[nd]{}--
( -30:.6) node[nd]{}--
(210:.6) node[rnd]{};
}}\hspace{-.9mm}
}

\newcommand{\TextsquareR}{%
\raisebox{-0.45mm}{\!
\tikz[scale=.2,clip]{\draw[thick]
(        45:.65) node[nd]{}--
(  45+90:.65) node[nd]{}--
(45+180:.65) node[rnd]{}--
(45+270:.65) node[nd]{}--
(        45:.65) node[nd]{};
}}\hspace{-.9mm}
}
\newcommand{\TextshovelR}{%
\raisebox{-0.45mm}{\!
\tikz[scale=.2,clip]{\draw[thick]
(        45:.65) node[nd]{}--
(  45+90:.65) node[nd]{}--
(45+180:.65) node[rnd]{}--
(45+270:.65) node[nd]{}--
(  45+90:.65) node[nd]{};
}}\hspace{-.9mm}
}
\newcommand{\TexttrypodR}{%
\raisebox{-0.45mm}{\!
\tikz[scale=.2,clip]{\draw[thick]
(        45:.65) node[nd]{}--
(  45+90:.65) node[nd]{}--
(45+180:.65) node[rnd]{}--
(  45+90:.65) node[nd]{}--
(       -45:.65) node[nd]{};
}}\hspace{-.9mm}
}


\newcommand{\TextdiamondR}{%
\raisebox{-0.45mm}{\!
\tikz[scale=.2,clip]{\draw[thick]
(        45:.65) node[nd]{}--
(  45+90:.65) node[nd]{}--
(45+180:.65) node[rnd]{}--
(45+270:.65) node[nd]{}--
(        45:.65) node[nd]{}--
(45+180:.65) node[rnd]{};
}}\hspace{-.5mm}
}

\newcommand{\TextbowtieR}{%
\raisebox{-0.45mm}{\!
\tikz[scale=.2,clip]{\draw[thick]
(        35:.8) node[nd]{}--
(          0:   0) node[rnd]{}--
(35+180:.8) node[nd]{}--
(-35-180:.8) node[nd]{}--
(          0:   0) node[rnd]{}--
(       -35:.8) node[nd]{}--
(        35:.8) node[nd] {};
}}
}
\newcommand{\TextDoubleTwoPathsR}{%
\raisebox{-0.45mm}{\!
\tikz[scale=.2,clip]{\draw[thick]
(        35:.8) node[nd]{}--
(          0:   0) node[rnd]{}--
(35+180:.8) node[nd]{}--
(-35-180:.8) node[nd]{} 
(          0:   0) node[rnd]{}
(       -35:.8) node[nd]{}--
(        35:.8) node[nd] {};
}}
}
\newcommand{\TextTriangleTwoPathsR}{%
\raisebox{-0.45mm}{\!
\tikz[scale=.2,clip]{\draw[thick]
(        35:.8) node[nd]{}--
(          0:   0) node[rnd]{}--
(35+180:.8) node[nd]{}--
(-35-180:.8) node[nd]{}--
(          0:   0) node[rnd]{}
(       -35:.8) node[nd]{}--
(        35:.8) node[nd] {};
}}
}









\newcommand{\TextcherryR}{%
\raisebox{-0.45mm}{\!\!
\tikz[scale=.2,clip]{\draw[thick]
(210:.6) node[rnd] {}--
(  90:.6) node[nd]{}--
( -30:.6) node[nd]{};
}}\hspace{-.9mm}
}













\allowdisplaybreaks
\makeatletter
\newcommand{\pushright}[1]{\ifmeasuring@#1\else\omit\hfill$\displaystyle#1$\fi\ignorespaces}
\newcommand{\pushleft}[1]{\ifmeasuring@#1\else\omit$\displaystyle#1$\hfill\fi\ignorespaces}
\makeatother
\let\originalleft\left%
\let\originalright\right%
\renewcommand{\left}{\mathopen{}\mathclose\bgroup\originalleft}%
\renewcommand{\right}{\aftergroup\egroup\originalright}%
\makeatletter%
\def\mybigx#1{\dimen@#1\relax%
\mathchoice%
{\vbox to \dimen@{}}%
{\vbox to \dimen@{}}%
{\vbox to .7\dimen@{}}%
{\vbox to .5\dimen@{}}}%
\def\mybig#1{{\hbox{$\left#1\mybigx{0.8em}\right.\n@space$}}}%
\makeatother%
\renewcommand{\P}{\ensuremath{\operatorname{\mathbb{P}}}}
\newcommand{\E}{\ensuremath{\operatorname{\mathbb{E}}}}
\newcommand{\II}{\ensuremath{\operatorname{\mathbb{I}}}}
\newcommand{\var}{\ensuremath{\operatorname{Var}}}
\newcommand{\cov}{\ensuremath{\operatorname{Cov}}}
\newcommand{\aut}{\ensuremath{\operatorname{aut}}}

\DeclareFontFamily{U}{mathx}{\hyphenchar\font45}
\DeclareFontShape{U}{mathx}{m}{n}{<-> mathx10}{}
\DeclareSymbolFont{mathx}{U}{mathx}{m}{n}
\usepackage{thmtools}
\usepackage{thm-restate}

\usepackage[noabbrev,capitalise,nameinlink]{cleveref}
\numberwithin{equation}{section}
\theoremstyle{plain}
\crefname{equation}{}{}
\crefname{figure}{Fig.}{Figs.}
\crefname{table}{Table}{Tables}
\newtheorem{Definition}{Definition}
\crefname{Definition}{Definition}{Definition}
\crefname{defin}{Definition}{Definition}
\newtheorem{Lemma}{Lemma}
\crefname{Lemma}{Lemma}{Lemmas}
\newtheorem{Proposition}{Proposition}
\crefname{Proposition}{Proposition}{Propositions}
\newtheorem{Theorem}{Theorem}
\crefname{Theorem}{Theorem}{Theorems}
\crefname{thm}{Theorem}{Theorems}

\crefname{Corollary}{Corollary}{Corollaries}
\crefname{coro}{Corollary}{Corollaries}
\newtheorem{Remark}{Remark}
\crefname{Remark}{Remark}{Remarks}

\title{Central limit theorems for local network statistics}%
\author{P-A. Maugis, University College London}
\date{}
\begin{document}
\lineskiplimit=-2pt
\linespread{1.1}
\abovedisplayskip= .7\baselineskip minus .7\baselineskip
\maketitle
\begin{abstract}
Subgraph counts---in particular the number of occurrences of small shapes such as triangles---char\-ac\-terize properties of random networks, and as a result have seen wide use as network summary statistics. However, subgraphs are typically counted globally, and existing approaches fail to describe vertex-specific characteristics. On the other hand, rooted subgraph counts---counts focusing on any given vertex's neigh\-bor\-hood---are fundamental descriptors of local network properties. We derive the asymptotic joint distribution of rooted subgraph counts in inhomogeneous random graphs, a model which generalizes many popular statistical network models. This result enables a shift in the statistical analysis of large graphs, from estimating network summaries, to estimating models linking local network structure and vertex-specific covariates. As an example, we consider a school friendship network and show that local friendship patterns are significant predictors of gender and race.
\end{abstract}

\section{Introduction}
We contribute the results---and associated statistical inference methods---to enable the study of large networks at the level of each vertex. To do so we first derive the limit joint distribution of vertex-specific subgraph counts in inhomogeneous random graphs. Then, we exhibit the usefulness of these counts by studying a network of school friendships and showing that local counts are significant predictors of gender and race.

A graph, or network, is composed of vertices connected by edges. For instance, students connected by friendship ties in a school. Such a graph may be studied at two different scales: one global, the other local. In our school friendship example, the former would consider friendship patterns throughout the school. The latter would study potential links between local patterns surrounding a student and other known features (such as gender or race). The former is well studied, and powerful tools exist to study graphs at the global scale~\cite{BickelLevina2012,bhattacharyya2013subsampling,rohe2011,tang2013,gao2015,klopp2016,rinaldo2013,sarkar2015,lei2015,zhao2012consistency,ambroise2011new}. We focus on the latter.

To study graphs at the local scale we introduce rooted subgraph counts. Such a count is calculated from a graph by counting the occurrences of a particular shape attached to a given vertex. The simplest such count is a vertex's degree: the number of edges attached to a vertex (\cref{EgoMotif}.(a) and (d)). The second most common rooted count is the number of triangles attached to a vertex (\cref{EgoMotif}.(c) and (f)). Rooted counts provide vertex-specific information: for instance, in our school friendship example, depending on how many edges and triangles a student is connected to, this student will play a different role in a contagion process~\cite{isham2011spread}. Effectively, rooted counts recover powerful fundamental characterizers of local graph properties~\cite[restricted homomorphism]{lovasz2014automorphism}, and are key features used to compare and classify biological networks~\cite[graphlet degree]{przulj2007,ali2014alignment,ahmed2017}. 

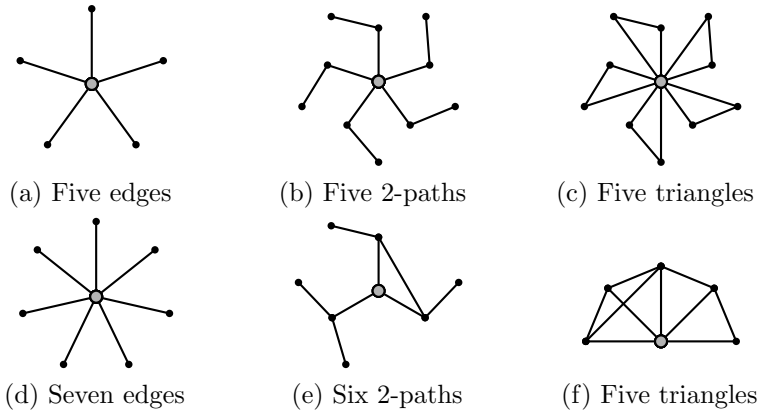
\begin{figure}[!t]
	\centering
 	\def\s{1}
		\tikzstyle{pop}=[circle, draw, fill=black!30,
				inner sep=0pt, minimum width=4.5pt]
	\tikzstyle{pip}=[circle, draw, fill=black,
				inner sep=0pt, minimum width=2pt]
		\tikzstyle{bluepop}=[circle, draw, color=blue,fill=blue!75,
						inner sep=0pt, minimum width=4.5pt]
		\tikzstyle{popi}=[circle, draw, color=blue!75,fill=blue!75,
						inner sep=0pt, minimum width=2pt]
		\tikzset{pil/.style={thick,color=black!30}}
		\tikzset{pul/.style={thick,color=blue!50}}
	{
	
	\noindent
	\begin{tabular}{ccc}
	\begin{minipage}[b][1.8cm]{.2\mylength}\centering\vfill
	\def\n{4}
	\def\p{5}
	\begin{tikzpicture}[thick,scale=\s*.5]
		\foreach \j in {0,...,\n}
			\draw {(0:0) node[pop] {} -- (90+\j*360/\p:2) node[pip] {}};
	\end{tikzpicture}
    \vspace{0.5\baselineskip}%
	\vfill
	{(a) Five edges}
    \vspace{1\baselineskip}%
	\end{minipage}
	\begin{minipage}[b][1.8cm]{.2\mylength}\centering\vfill
	\def\n{4}
	\def\p{5}
	\begin{tikzpicture}[thick,scale=\s*.357]
		\foreach \j in {0,...,\n}
			\draw {(0:0) node[pop] {}
					-- (90+\j*360/\p:2) node[pip] {}
					-- (90+36+\j*360/\p:3) node[pip] {}};
	\end{tikzpicture}
	\vfill
	{(b) Five 2-paths}
    \vspace{\baselineskip}%
	\end{minipage}
	\begin{minipage}[b][1.8cm]{.2\mylength}\centering\vfill
	\def\n{4}
	\def\p{5}
	\begin{tikzpicture}[thick,scale=\s*.357]
		\foreach \j in {0,...,\n}
			\draw {(0:0) node[pop] {}
					-- (90+\j*360/\p:2) node[pip] {}
					-- (90+36+\j*360/\p:3) node[pip] {}
					-- (0:0) node {}};
	\end{tikzpicture}
	\vfill
	{(c) Five triangles}
    \vspace{\baselineskip}%
	\end{minipage}
	\tabularnewline
	\begin{minipage}[b][2cm]{.2\mylength}\centering\vfill
	\def\n{6}
	\def\p{7}
	\begin{tikzpicture}[thick,scale=\s*.5]
		\foreach \j in {0,...,\n}
			\draw {(0:0) node[pop] {}  -- (90+\j*360/\p:2) node[pip] {}};
	\end{tikzpicture}
	\vfill
	{(d) Seven edges}
	\end{minipage}
	\begin{minipage}[b][2cm]{.2\mylength}\centering\vfill
	\def\n{2}
	\def\p{3}	
	\begin{tikzpicture}[thick,scale=\s*.357]
		\foreach \j in {0,...,\n}
			\draw {(0:0) node[pop] {}
					-- (90+\j*360/\p:2) node[pip] {}
					-- (90+36+\j*360/\p:3) node[pip] {}};
		\draw {(210:2) node[pip] {} -- (210-36:3) node[pip] {}};
		\draw {(90:2) node[pip] {} -- (330:2) node[pip] {}};
	\end{tikzpicture}
	\vfill
	{(e) Six 2-paths}
	\end{minipage}
	\begin{minipage}[b][2cm]{.2\mylength}\centering\vfill
	\def\n{3}
	\def\p{4}
	\begin{tikzpicture}[thick,scale=\s*.5]
		\foreach \j in {0,...,\n}
			\draw {(0:0) node[pop] {}
					-- (\j*180/\p:2) node[pip] {}
					-- (\j*180/\p+180/\p:2) node[pip] {}};
		\draw{(0:0) node[pop] {} -- (\n*180/\p+180/\p:2) node[pip] {}};
		\draw{(3*180/\p+180/\p:2) node[pip] {} -- (180/\p+180/\p:2) node[pip] {}};
	\end{tikzpicture}
	\vfill\vfill
	{(f) Five triangles}
	\end{minipage}
	\end{tabular}}

	\caption{Diagrams illustrating rooted subgraph counts. In each subfigure we consider the rooted subgraph count of the central vertex, highlighted in grey. The rooted subgraphs counted are, in the first to third column, an edge, a path of length 2, and a triangle.}
	\label{EgoMotif}
\end{figure}

To undertake vertex-specific inference we must quantify the variations of rooted counts expected under a null. Specifically, a description of the ranges of local behaviors that can be explained by known global network features, such as sparsity, heavy tailed degree distributions and community structure. The null of inhomogeneous random graphs~\citep{bollobas2007phase}, which subsumes most models used in the statistical literature~\cite{BickelLevina2012,bhattacharyya2013subsampling,rohe2011,tang2013,gao2015,klopp2016,rinaldo2013,airoldi2008mixed,sarkar2015,lei2015,zhao2012consistency,hoff2002latent,hoff2007modeling,young2007random,Sussman2012}, is recognized as sufficient to model these three global features. The question we address then is: ``Are observed local behaviors consistent with this model?''

Surprisingly, we observe that while inhomogeneous random graphs do not explicitly model local behavior---something transparent in our results below---the null is sufficient to account for local behavior in several social networks. Specifically, while the null can be rejected as a model for local behavior across the whole network at once, it cannot be rejected as a model to describe local behavior within covariate-informed subnetworks; in our school friendship example, we find that the local behavior within grades is accounted for, but not between grades. Further studies, that our tools enable, could look into determining the intermediate scale at which the null starts to fail, and characterizing how local behavior departs from the null.

Underpinning this finding are two main contributions. The first is to derive the asymptotic joint distribution of rooted counts under the null. Surprisingly, no standard results or proof techniques exist to study rooted counts---proof techniques for global counts rely on the overlap between copies being limited~\cite{nowicki1988,barbour1989,BickelLevina2012,coulson2016poisson}, while this does not hold for rooted copies---and very little is known about rooted counts even in the Erd\H{o}s-R\'enyi model~\cite{rucinski1986balanced,spencer1990extensions,janson2011}. To prove our result we rely on the intuition of the historically oldest moment-based methods for global counts~\cite{erdos59:_on_random_graphs,bollobas1981threshold,rucinski1988small}: that moments of subgraph counts are tied together by an algebraic structure. To uncover such a structure between rooted counts, we follow the proof progression of~\cite{lovasz2012large} and first present a quasi homomorphism tying together products of rooted counts. This then enables us to determine how rooted copies are most likely to overlap, elicit the sought-after algebraic structure, and eventually deduce all the moments of rooted counts. We find that odd central moments are negligible, while even central moments follow a double factorial progression, which ultimately allows us to resolve the rooted counts' limiting distribution as Normal. 

Our second main result is to prove that the sample of rooted counts---the array of rooted counts across vertices---can be treated, for the purpose of statistical inference, as an independent and identically distributed (i.i.d.) sample; e.g., one can perform maximum likelihood estimation, obtain the Fisher information matrix and produce confidence intervals in the standard way (see \cref{ml-estimation}). This enables fully non-parametric estimation of models linking rooted counts and vertex-specific covariates using generic statistical tools. This is key for applications, as existing methods are hard to implement, do not enable the use of covariates, or scale so well to large graphs. A large literature exists on efficient counting of rooted graphs; e.g.,~\cite{przulj2007,ali2014alignment,ahmed2017}.

Both results are the best of their kind: we obtain convergence in moments for any rooted count, but find that uniform convergence does not hold; we obtain uniform convergence for averages of rooted counts, but find that convergence to be arbitrarily slow. In both cases it is the strength of the dependence that limits the convergence. This leads to additional care being needed when working on data (e.g.,~\cref{cval}.)

\paragraph{Outline} \cref{Elicitation} introduces the null model and the definitions needed to treat rooted counts. \cref{Theory} presents two new central limit results for rooted counts. \cref{Method} presents: a goodness-of-fit test that identifies the vertices causing the lack of fit; and a regression, with associated confidence regions, linking rooted counts and vertex-specific covariates. \cref{Discussion} concludes. Proofs can be found in \cref{proof-motif-density-limit} and \ref{ProofCLT}. Simulation experiments and algorithms can be found in \cref{metho}.

\section{Elicitation: subgraphs in random graphs}
\label{Elicitation}
We now introduce our null model and our statistic. Our null model is that of generalized, or kernel-based, random graphs~\cite{bollobas2007phase}. Our statistics are rooted subgraph densities, and are related to the statistics used in~\cite{rucinski1986balanced,spencer1990extensions,przulj2007,ali2014alignment}.

As in~\cite{BickelLevina2012,zhao2012consistency,gao2015}, to allow for sparsity, community structure and power-law degree distribution, we use inhomogeneous random graphs~\cite{bollobas2007phase}:

\begin{Definition}[Inhomogeneous random graphs sequence $G(\rho,\kappa)$]
\phantomsection\label{model}
Let $\kappa\in L^\infty([0,1]^2)$ be a {\emph kernel} (i.e., symmetric, positive, bounded by $1$, and such that $\int_{{}^{_{[0,1]}}}\!\kappa(x,\cdot)dx>0$) and $\rho = (\rho_n)_{n>0}$ be a sequence in $(0,1)$. We call $G(\rho,\kappa) = (G_n)_{n>0}$ the sequence of random graphs such that for each $n$, $G_n$ is the random graph on $[n]$ where to each vertex $i$ is associated independently $x_i\sim\operatorname{Uniform}([0,1])$ and such that independently across pairs $\{i,j\}\subset[n]$,
\[\II\{ij\in G_n\}\,\vert\, x_i,x_j \sim \operatorname{Bernoulli}\!\big(\rho_n\kappa(x_i,x_j)\big).\]
\end{Definition}

\cref{model} subsumes, among others, blockmodels, latent space models~\cite{hoff2002latent,hoff2007modeling} and the random dot-product models~\cite{nickel2006random,young2007random,Sussman2012,athreya2013limit}. Such models assign to each vertex a position in a space, say $\bm{z}_i\in \mathcal{X}\subset\smash{\mathbb{R}^k}$, and the probability of an edge is tied to a function $\phi$ of the $\bm{z}_i$-s; i.e., $\mathbb{P}\big(ij\in G\,\vert\, \bm{z}_i,\bm{z}_j\big) = \phi(\bm{z}_i,\bm{z}_j)$. This is equivalent to assuming that $\kappa$ may be written as $\kappa(x_i,x_j) = \phi(\psi(x_i),\psi(x_j))$, with $\psi:[0,1]\to\mathcal{X}$. Note that by Borel isomorphism theorem, $\psi$ can be bijective, and that setting the $x_i$-s in $[0,1]$ in \cref{model} is not restrictive.

In the following we focus on asymptotic results in the limit of large $n$. Especially, we shall consider the sparse regimes where $\rho_n\to0$. To address this problem, we define our statistic as the normalize rooted count, thereby yielding statistics that remains order one in the limit:

\begin{Definition}[Rooted graph~\protect{\cite{harary1955}}]
 \phantomsection\label{defn:rooted-graph}
A rooted graph $F = [F,v]$ is a labeled graph $F$ in which a vertex $v$ has been singled out. Two rooted graphs $F=[F,v]$ and $F'=[F',v']$ are isomorphic, and we write $F\equiv F'$, if there exists an adjacency preserving isomorphism that maps $F$ onto $F'$ and $v$ onto $v'$.
\end{Definition}

\begin{Definition}[Rooted subgraph density]
\phantomsection\label{defn:rooted-count}
Fix a rooted graph $F$, a simple graph $G$ of order $n$, and a vertex $i\in G$. Denoting $K_n$ the complete graph on $n$ vertices and $e(H)$ the number of edges in a graph $H$, we write
\begin{equation*}
s_i(F,G) = \left(\frac{e(G)}{e(K_n)}\right)^{\!\!-e(F)}\frac{\#\{F'\subset G_{\phantom{n}}:[F',i]\equiv F\}}{\#\{F'\subset K_n:[F',i]\equiv F\}}
\end{equation*}
for the rooted density of $F$ at vertex $i\in G$.
\end{Definition}

\Cref{defn:rooted-count} is standard---it may be understood as the normalized probability of $|F|-1$ uniformly selected vertices forming a copy of $F$ rooted at $i$---and relates directly to others; e.g.,
~\citet[rooted graph]{janson2011};~\citet[graph extension]{spencer1990extensions};~\citet[restricted homomorphism]{lovasz2014automorphism};~\citet[graphlet]{ali2014alignment,przulj2007};~\citet[graph in vertex's ego-network]{moustafa:icde12,cunningham2013ego};~\citet[role sequence]{Miller2009percolation,Newman2009clustering,Karrer2010subgraphs}. The global subgraph density, which is more common in the literature, is up to a constant factor, the sum across vertices of the rooted densities~\cite{Lovasz2006limits,bollobas2007phase,BickelLevina2012,bhattacharyya2013subsampling}.

With this definition, we will show in \cref{Theory} that as long as the average degree grows with $n$---i.e., if $n\rho_n\to\infty$, which is the minimal assumption for statistical inference under our null model~\cite{BickelLevina2012,zhao2012consistency,gao2015}---then for $F$ a tree, the $s_i(F,G_n)$ are asymptotically Normal. Therefore, surprisingly, there is no gap between the minimal assumption necessary for local and global inferences in inhomogeneous random graphs.

\section{Theory: limit distribution of rooted densities}
\label{Theory}

We now present our two main results. The first characterizes the limit distribution of rooted densities at any vertex. The second is a central limit result for the average, taken across vertices, of functions of rooted densities. In \cref{Method} we will show through an example that these results lead to new methods for statistical inference on network data. 

\subsection{Limit of rooted densities} Our proofs rely on the method of moments: we prove moment convergence to obtain convergence in distribution. \citet{erdos59:_on_random_graphs}, \citet{bollobas1981threshold}, and ultimately~\citet{rucinski1988small}, exhibited the power of this proof technique to study global subgraph counts. Especially, we find that although $U$-statistics and Poisson convergence based methods are easier to implement, only the method of moments handles the strong dependence that may exist between rooted copies, and yields sufficient and necessary conditions for any subgraph $F$ and density regime $\rho$.

The power of the moment method, when applied to subgraph counts, rests crucially upon the very simple asymptotic behavior of the expected number of rooted copies. Specifically, if $(G_n)\sim G(\rho,\kappa)$, then for any rooted graph $F$ and vertex $i$ we have (see \cref{first-order})
\[
\E \#\{F'\subset G_n\,:\, [F',i]\equiv F\} = \Theta\big(n^{|F|-1}\rho_n^{e(F)}\big).
\]
Thus, the simplest features of $F$---its number of vertices and edges---char\-ac\-te\-rize the first order behavior of its rooted count; e.g., by Markov's inequality, there are asymptotically no copies of $F$ rooted at $i$ if $n\rho_n^{e(F)/(|F|-1)}\to0$.

Using a proof technique inspired by~\citet{bollobas1981threshold},~\citet{rucinski1988small} and~\citet{lovasz2012large}, we build upon the simple structure of the first moment to char\-ac\-te\-rize higher order moments. To do so, we first introduce $\mathcal{H}_F$, the set of rooted graphs that can be built from two copies of $F$ sharing their root and at least one other vertex: for the rooted triangle $\TexttriangleR\ $(root vertex in grey), $\mathcal{H}_{\TexttriangleR} = \{\! \TextdiamondR\ , \TexttriangleR\,\}$; for the rooted cherry $\TextcherryR\ $, $\mathcal{H}_{\TextcherryR} = \{\!\TextsquareR\ ,\TextshovelR\ ,\TexttrypodR\ ,\TexttriangleR\ , \TextcherryR\,\}$. Then, we find that the variance of the number of rooted copies is of the same order of magnitude as the sum of the number of copies of elements of $\mathcal{H}_F$.

We then show that the $2k$-th moment of the number of rooted copies is also driven by $\mathcal{H}_F$. This higher order moment is equal, up to a negligible term, to the total number of copies of all the subgraphs that can be built from $k$ elements of $\mathcal{H}_F$ sharing only their root. This enables us to show that $2k$-th moment is asymptotically equal to $(k-1)!!$ times the variance to the $k$-th power, and then---invoking the Hausdorff moment method---that the $s_i(F,G_n)$ are asymptotically Normal (proof to be found in \cref{proof-motif-density-limit}):

\begin{restatable}[Limit of rooted densities]{thm}{thmlocal}
\phantomsection\label{motif-density-limit}
\vspace{-.5\baselineskip}
Set $(G_n) \sim G(\rho,\kappa)\,|\,x_i = x$ and define the parameter $m = \max\{e(H)/(|H|-1):v\in H\subset F, |H|>1\}$. Then, asymptotically in $n$,
\begin{equation*}
s_i(F,G_n)\to_p 
\begin{cases}
0 & \text{if } n\rho_n^m\to 0,\\
s_x(F,\kappa) : = 
\E\left[\,\prod_{pq\in F}\kappa(x_p,x_q)\Big | x_i=x\,\right] & \text{if } n\rho_n^m\to\infty.
\end{cases}
\end{equation*}
Furthermore, in the later case, and if $s_x(F,\kappa)>0$,
\begin{equation*}
\frac{s_i(F,G_n)-s_x(F,\kappa)}{\sqrt{\var s_i(F,G_n)}}
\xrightarrow[\quad ]{\mathcal{L}}
\mathrm{Normal}(0,1).
\end{equation*}
The result extends to the multivariate setting: a vector of standardized rooted subgraph densities is asymptotically multivariate Normal.
\vspace{-.5\baselineskip}
\end{restatable}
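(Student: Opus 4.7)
The plan is to apply the method of moments: for the supercritical regime, I would compute every centered moment of $s_i(F,G_n)$, match them term by term against the moments of a Normal distribution, and invoke the Hausdorff moment problem (the Normal is determined by its moments, and our normalized statistic is controlled enough that moment convergence delivers convergence in distribution).

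First I would compute $\E[s_i(F,G_n)\mid x_i=x]$. Expanding as a normalized sum of edge-indicator products over all root-preserving injections of $F$ into $K_n$, conditioning on the latent coordinates $(x_j)$ and using conditional edge independence, the leading term is $s_x(F,\kappa)$ with an error of order $(n\rho_n^m)^{-1}$, which vanishes in the supercritical regime and delivers the asserted limit in probability once the variance is controlled. In the subcritical regime $n\rho_n^m\to 0$, let $H\subset F$ (containing the root) be the subgraph attaining the maximum in the definition of $m$; then the expected number of rooted copies of $H$ is $\Theta(n^{|H|-1}\rho_n^{e(H)}) = \Theta\bigl((n\rho_n^m)^{|H|-1}\bigr)\to 0$. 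Since every rooted copy of $F$ contains a rooted copy of $H$, Markov's inequality gives $s_i(F,G_n)\to_p 0$.

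Second, and most centrally, I would compute the $2k$-th centered moment. Expanding as a sum over $2k$-tuples of potential rooted copies of $F$ sharing root $i$ and centering removes the contribution of tuples that are vertex-disjoint outside $i$; only tuples whose collective overlap graph has no isolated component survive at leading order. The dominant contribution comes from tuples that decompose into $k$ pairs (each pair disjoint outside $i$ from the others), each pair overlapping through an element of $\mathcal{H}_F$ attaining the maximal density ratio; the $(2k-1)!!$ perfect matchings of $2k$ labels then reproduce the Gaussian moment $(2k-1)!!\cdot\sigma^{2k}$ with $\sigma^2 = \var s_i(F,G_n)$. All other overlap patterns force either an extra vertex-coincidence (costing a factor of order $n^{-1}$) or an extra edge (costing a factor of order $\rho_n$), and are hence sub-leading by the maximality property defining $m$. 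Odd moments vanish at leading order by the same pairing argument: any unpaired copy would force a triple-overlap and incur one of these penalties. Normalizing by $\sqrt{\var s_i(F,G_n)}$ matches the moments of $\mathrm{Normal}(0,1)$, and Hausdorff closes the univariate CLT.

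The multivariate extension follows by running the same moment computation on mixed $2k$-tuples indexed by several rooted graphs $F_1,\ldots,F_r$: only matched pairings of copies of the \emph{same} $F_\ell$ that share a dominant overlap contribute at leading order, which produces the correct Gaussian covariance structure; alternatively, Cram\'er--Wold reduces to the univariate case applied to arbitrary linear combinations. The main obstacle will be the combinatorial control of all overlap patterns. Unlike the global-count setting where most copies are essentially vertex-disjoint, here every copy shares the root, so one must enumerate and bound all possible coincidences among the remaining $|F|-1$ vertices across $2k$ copies, and prove uniformly that the non-pairing patterns are of strictly smaller order. The key technical device, foreshadowed in the discussion before the theorem, is the quasi-homomorphism structure on products of rooted counts: it yields an algebra that reduces the leading-order analysis to a tractable sum over matchings within $\mathcal{H}_F$, isolating the Gaussian combinatorics from the sub-leading debris.
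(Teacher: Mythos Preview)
Your approach is essentially the paper's: method of moments, expansion of the $k$-th centered moment as a sum over tuples of rooted copies, elimination of tuples with an isolated copy, induction to show that any copy overlapping two or more others is sub-leading, and factorization over perfect matchings to recover $(2k-1)!!\,\sigma^{2k}$. Two points deserve correction.

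First, a minor imprecision: within a matched pair, it is not only the element of $\mathcal{H}_F$ ``attaining the maximal density ratio'' that contributes. The paper's \cref{first-order} gives $\var X_F = (1+O(1/n))\sum_{H\in\mathcal{H}_{F,F}\setminus\{F^2\}} c_H\,\E X_H$, a sum over \emph{all} non-trivial overlaps; each pair in the factorization contributes exactly one full covariance, not just its leading summand. This matters because the variance can have several summands of the same order.

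Second, and more substantively, your multivariate claim that ``only matched pairings of copies of the \emph{same} $F_\ell$'' contribute is wrong and would yield a diagonal limit covariance. Pairings between a copy of $F_p$ and a copy of $F_q$ with $p\neq q$ contribute $\cov(X_{F_p},X_{F_q})$ at leading order; the paper's \cref{cross-mom} is stated for arbitrary $F_1,\dots,F_k$ precisely for this reason, and the multivariate moments match the Isserlis formula $\sum_{s\in[rk]^{(2)}}\prod_{\{p,q\}\in s}\Sigma_{pq}$ with the full correlation matrix $\Sigma$. Your Cram\'er--Wold alternative is valid, but note that carrying it out on $\sum_\ell a_\ell s_i(F_\ell,G_n)$ forces you to compute exactly these cross-pairings to recover the variance $a^\top\Sigma a$, so it is not a way around the issue.
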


\cref{motif-density-limit} generalizes~\citet[Theorem~5]{rucinski1986balanced} which considers sums of rooted counts of balanced subgraphs, and~\citet[Theorem~5]{spencer1990extensions} which considers the rate at which rooted counts concentrate, both under a constant kernel. A formula for $\var s_i(F,G_n)$ is provided in \cref{var-closed-form}.

The rate at which $\var s_i(F,G_n)$ shrinks, and therefore the rate of convergence toward the Normal distribution, changes with $\rho$. This parallels the behavior of global subgraph counts in the Erd\H{o}s-R\'enyi model~\cite{rucinski1988small}. If $n\smash{\rho_n^m}$ diverges slowly, copies of $F$ are likely to substantially overlap, and the rate depends in complex way on the structure of $F$. However, if $n\smash{\rho_n^m}$ diverges fast enough, copies tend to only overlap over one edge, and the rate is $\sqrt{n\rho_n}$. Finally, if $n\smash{\rho_n^m}=\Theta(1)$, although all moments converge, in general no limit distribution can be elicited~\cite{rucinski1988small}. More details are provided in \cref{proof-motif-density-limit}. 

\subsection{Central limit result for rooted densities}
\cref{motif-density-limit} shows that, conditionally on $x_i$, $s_i(F,G_n)$ is an unbiassed and asymptotically Normal estimator of $s_{x_i}(F,\kappa)$. However, \cref{motif-density-limit} leaves the joint behavior of the rooted densities unexplained. We now consider to what extent it is possible to use the $s_i(F,G_n)$ to estimate distributional properties of $s_{x_i}(F,\kappa)$. The end goal is, for a covariate $y_i$ in some set $\mathcal{Y}$ and observed at each vertex $i$, to use the $s_i(F,G_n)$ to estimate first $\cov\!\big(s_{x_i}(F,\kappa),y_i\big)$, and then a regression, or some other model, linking $s_{x_i}(F,\kappa)$ to $y_i$.

To link $(s_{x_i}(F,\kappa))_{i\in[n]}$ to $Y=(y_i)_{i\in[n]}$, we consider the average across vertices of functions of rooted counts; i.e., for some map $f$, estimators of the form $n^{-1}\sum_i f(s_i(F,G_n),y_i)$. While linking $y_i$ directly to $x_i$ could seem better, that would not make an explicit link between covariates and observed features, making such an approach hard to interpret.

Counterintuitively, the limit distribution of such estimators does not follow immediately from \cref{motif-density-limit}, even for a simple map $f$. Indeed, as for any $F$ we have $\cov\!\big(s_i(F,G_n),s_j(F,G_n)\big)=\Omega\big(n^{-1}\big)$, the dependence between the summands is too strong for a central limit theorem (CLT) to immediately hold; the level of dependence between the $s_i(F,G_n)$ violates what are presented as the best possible sufficient assumption for an exchangeable sequence to verify a CLT~(\citet[p227]{blum1958central}.)

Furthermore, methods to address such dependent sums fail: CLT for $U$-statistics, as in \cite{BickelLevina2012}, apply only if $f$ is a projection; martingale results for exchangeable arrays, such as~\citet{weber1980martingale}, fail because the dependence between the $s_i(F,G_n)$ is too strong; CLT for weakly dependent stationary sequences (e.g.,~\citet{Newman81}), require a specific dependence structure between the $s_i(F,G_n)$ which need not apply for general $\kappa$.

Nonetheless, results such as \cite{BickelLevina2012} and \cite{coulson2016poisson}, show that for $f$ a projection, the estimator is well behaved. Therefore, the covariance between the $s_i(F,G)$ is such that some concentration can occur. Two observations clarify necessary conditions on $f$. First, for the estimator to be consistent, $f$ must be continuous, and since the $s_{x_i}(F,\kappa)$ live in a compact, $f$ can be understood to be uniformly continuous. Then, for $\cov\!\big(f(s_i(F,G), y_i),f(s_j(F,G), y_j)\big)$ to commensurate with $\cov\!\big(s_i(F,G), s_j(F,G)\big)$, $f$ must have an almost everywhere bounded derivative (e.g., see~\citet{cuadras2002}). Therefore, $f$ Lipschitz in its first argument is a necessary assumption, which we find to be sufficient (see \cref{ProofCLT}.)

\begin{restatable}[Central limit result for rooted densities]{thm}{thmglobal}
\phantomsection\label{motif-density-clt}
\vspace{-.5\baselineskip}\hspace{-.31cm}
Let $\big(G_n, Y\big)$ be such that $(G_n)\!\sim\!G(\rho,\kappa)\!$ and $(x_i, y_i)$ is i.i.d. $D$ for some distribution $D$ over $[0,1]\times\mathcal{Y}$. Set the parameter $\gamma\!=\!\max\{e(H)/(|H|\!-\!1)\!:\!v\!\not\in\! H\subset F, |H|\!>\!1\}$ and $f:\smash{[0,+\infty[\times\mathcal{Y}}\mapsto\smash{\mathbb{R}^d}$, Lipschitz in its first argument. Then, if $n\smash{\rho_n^{\gamma}}\to\infty$, $\var s_x(F,\kappa) > 0$, and $f(s_x(F,\kappa),y)\,\vert\,(x, y) \sim D$ has second moments,
\vspace{-.1\baselineskip}\[
\sqrt{n}\left(
\frac1n\sum_{i\in[n]}f\big(s_i(F,G_n),y_i\big)
-
\E f\big(s_i(F, G_n), y_i\big)
\right)
\xrightarrow[n\rightarrow\infty]{L}
\mathrm{Normal}\left(0,\Sigma\right),
\vspace{-.1\baselineskip}\]
where $\Sigma=\cov_{(x,y) \sim D}f\big(s_x(F, \kappa), y\big)$. The result still holds if $s_i(F,G_n)$ is replaced by a vector of rooted densities, and admits a natural Berry-Esseen type extension (\cref{berry-eseen}.)
\vspace{-.5\baselineskip}
\end{restatable}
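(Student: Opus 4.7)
The plan is a replacement-and-Slutsky strategy. Define the oracle statistic $T_n^\star := n^{-1}\sum_i f(s_{x_i}(F,\kappa), y_i)$; because $(x_i,y_i)$ is i.i.d.\ $D$, the summands in $T_n^\star$ are i.i.d.\ vectors with covariance $\Sigma$, so the classical multivariate Lindeberg-L\'evy CLT directly gives $\sqrt{n}(T_n^\star - \E T_n^\star) \to \mathrm{Normal}(0,\Sigma)$; the hypotheses $\var s_x(F,\kappa) > 0$ and finiteness of second moments of $f(s_x(F,\kappa),y)$ ensure $\Sigma$ is finite and nondegenerate. By Slutsky's theorem, the desired CLT for $T_n$ then reduces to showing that the residual $R_n := T_n - T_n^\star$ satisfies $\sqrt{n}(R_n - \E R_n) \to_p 0$, which by Chebyshev reduces further to $n\var(R_n) \to 0$.

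Writing $\delta_i := f(s_i(F,G_n), y_i) - f(s_{x_i}(F,\kappa), y_i)$ and expanding,
\begin{equation*}
n\var(R_n) = \frac{1}{n}\sum_i \var\delta_i + \frac{1}{n}\sum_{i\neq j}\cov(\delta_i, \delta_j).
\end{equation*}
The Lipschitz hypothesis on $f$ gives $|\delta_i| \le L|s_i(F,G_n) - s_{x_i}(F,\kappa)|$, so the diagonal term is bounded by $L^2 \E[(s_i - s_{x_i}(F,\kappa))^2]$. By \cref{motif-density-limit} together with the explicit variance formula of \cref{var-closed-form}, this quantity tends to $0$, so the diagonal contribution is $o(1)$.

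The off-diagonal cross-covariance is the principal technical obstacle. Applying Lipschitz reduces $\cov(\delta_i,\delta_j)$ to a joint-moment calculation for the pair $(s_i(F,G_n), s_j(F,G_n))$, adapting the moment-method template of \cref{proof-motif-density-limit}. The conditional joint expectation $\E[s_i(F,G_n)\,s_j(F,G_n)\,|\,x_i,x_j]$ is enumerated over pairs of rooted copies of $F$ at the distinct roots $i$ and $j$, classified by the subgraph $H\subseteq F$ formed by their shared non-root vertices (so necessarily $v\notin H$). Whereas \cref{motif-density-limit} analyses overlaps of copies at the \emph{same} root (parametrised by $m$ via $\mathcal{H}_F$), the cross-covariance forces the analogous enumeration for copies at \emph{distinct} roots, which is precisely what the parameter $\gamma$ governs. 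Under the hypothesis $n\rho_n^\gamma\to\infty$, the overlap contributions with $|H|\ge 1$ are asymptotically suppressed and the off-diagonal sum is $o(1)$, closing the variance bound. This cross-copy enumeration---not previously treated in the literature---is the main novel step of the proof.

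Once $n\var(R_n)\to 0$ is in hand, Slutsky yields the univariate CLT; the multivariate extension then follows from the Cram\'er-Wold device applied linearly in the coordinates of $f$, and the Berry-Esseen refinement of \cref{berry-eseen} is obtained by combining the classical Berry-Esseen rate for the i.i.d.\ sum $T_n^\star$ with the $L^2$ bound on $R_n$.
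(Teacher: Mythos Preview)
Your overall replace-and-Slutsky strategy is the same as the paper's, but the off-diagonal covariance step has a genuine gap. When you enumerate $\E[s_is_j\mid x_i,x_j]$ by the shared non-root subgraph $H$, the contribution from copies sharing \emph{vertices but no edges} (in particular $|H|=1$, or $|H|\ge2$ with $e(H)=0$) is $\Theta(1/n)$, and this term is \emph{not} governed by $\gamma$ at all: $\gamma$ only constrains $e(H)/(|H|-1)$ for $|H|>1$, and says nothing when $e(H)=0$. That $\Theta(1/n)$ piece makes $(n-1)\cov(\delta_i,\delta_j)=\Theta(1)$, which is fatal for your Chebyshev bound. The rescue is that subtracting $s_{x_i}$ is exactly the H\'ajek projection of $\E[s_i\mid x]$, so the $\Theta(1/n)$ term cancels after the subtraction---but that cancellation is invisible from your $(x_i,x_j)$-conditional enumeration. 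The paper instead conditions on \emph{all} of $x=(x_k)_k$ and splits via the law of total variance: conditionally on $x$, edges are independent, so only copies sharing an \emph{edge} contribute to $\E\cov(s_i,s_j\mid x)$, and this is precisely what $n\rho_n^\gamma\to\infty$ kills; the remaining piece $\var\big(n^{-1/2}\sum_i(\E[s_i\mid x]-s_{x_i})\big)$ is handled by recognizing $n^{-1}\sum_i\E[s_i\mid x]$ as a $U$-statistic with H\'ajek projection $n^{-1}\sum_i s_{x_i}$.

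A secondary issue: the step ``Lipschitz reduces $\cov(\delta_i,\delta_j)$ to joint moments of $(s_i,s_j)$'' is not justified---a pointwise bound $|\delta_i|\le L|s_i-s_{x_i}|$ does not give a covariance inequality. The paper handles this differently: it introduces auxiliary \emph{independent} copies $\delta_i$ of $s_i-s_{x_i}$ and takes the oracle to use $s_{x_i}+\delta_i$ rather than $s_{x_i}$, so the oracle sum is simultaneously i.i.d.\ and has the same marginals as $T_n$ (hence the residual has mean zero); the Lipschitz bound is then applied pointwise to the residual's summands, reducing to $n^{-1/2}\sum_i|s_i-s_{x_i}-\delta_i|$, whose covariance structure is controlled via a Cuadras-type lemma for the absolute value function.
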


Fixing $f$ to be the identity function, and omitting $Y$, we observe that \cref{motif-density-clt} generalizes~\citet[Theorem~2]{rucinski1988small},~\citet[Theorem~1]{BickelLevina2012}, \citet[Propositions~35--36]{bordenave2015backtrack} and~\citet[Corollary~4.1]{coulson2016poisson}. \cref{motif-density-clt} also specifies these results by writing the limiting covariance matrix $\Sigma$ as a function of the kernel $\kappa$ and the subgraph $F$ (see \cref{var-closed-form}). The Berri-Essen extension recovers the same rates as presented in~\citet[Theorem~4.2]{privault2020}. See \cref{thm_extensions} for comparison with \cite{barbour2019}.

The assumption underlying \cref{motif-density-clt} is stronger than the one behind \cref{motif-density-limit}. Furthermore, following~\cite[Theorem~2]{rucinski1988small} and~\cite[Theorem~1]{BickelLevina2012}, it is necessary for $\sqrt{n}$-consistency even for $f$ a projection. Also, for some $F$, including trees and cycles, $\gamma=m$ (see the notion of balanced graphs~\cite{erdos59:_on_random_graphs,rucinski1986balanced}).

\cref{motif-density-clt} enables many tests for dependence between rooted counts and vertex covariates. We now turn to use this observation in \cref{Method}, where we will uncover the link between rooted counts and gender in a social network.

\begin{Remark}\label{ml-estimation}
\cref{motif-density-clt} shows that when using $(y_i, s_i(F,G))_i$ to fit a probabilistic model, statistical inference proceeds as if one were using the i.i.d. sample $(y_i, s_{x_i}(F,\kappa))_i$. To see this, fix a smooth log-likelihood function $l_\theta(\cdot,\cdot)$, and set $\hat\theta_G=\arg\!\max_\theta\sum_il_\theta(Y_i, s_i(F,G))$, $\hat\theta_\kappa=\arg\!\max_\theta\sum_il_\theta(Y_i, s_{x_i}(F,\kappa))$. Then, \cref{motif-density-clt} shows that $\hat\theta_G$ and $\hat\theta_\kappa$ converge at the same rate and toward the same value. Further, the asymptotic variance of $\hat\theta_G$ may be computed in the standard way, using the Fisher information matrix. Other approaches, such as, but not limited to, least square optimization and generalized method of moment, would also apply without adaptation. See \cref{uniform-motif-density-clt} for a discussion of the assumptions on $l_\theta$ necessary for this claim to hold.
\end{Remark}

\begin{Remark}\label{np-estimation}
\cref{motif-density-clt} allows for non-parametric estimation of $\kappa$. Specifically, as kernels that are fully determined by a finite number of subgraph densities---also termed {\em finitely forcible} kernels~\cite{lovasz2012large} (e.g., blockmodels)---admit global subgraph densities as sufficient statistics~\cite{ambroise2011new,BickelLevina2012}, they also admit rooted densities as sufficient statistics. Further, following~\cite{lovasz2014automorphism}, we conjecture that rooted densities are in fact sufficient statistics for all kernels of finite rank.
\end{Remark}

\section{Methodology: goodness-of-fit and regression}
\label{Method}
We study a social network through two new methods for statistical inference on network data. The first method uses \cref{motif-density-limit} to build a goodness-of-fit test able to locate which vertices cause the lack of fit. The second uses \cref{motif-density-clt} to estimate a regression model linking vertex covariates and rooted densities, and test for significant parameters. Simulations describing the small sample convergence of the proposed methods are presented in \cref{metho}.

\subsection{The dataset}
The network we study, henceforth $G$, is taken from the US National Longitudinal Study of Adolescent Health, specifically school 44 in~\cite{adhealth} which was studied by~\cite{olhede2013network}. Each vertex in $G$ is a student, and a pair of students is connected if at least one of them nominated the other as friend. For almost all students, we have three categorical covariates: gender, race and school grade. There are $n=1117$ students in $G$, comprising $559$ males, $556$ females, $429$ blacks, $525$ whites, $153$ other races. Furthermore, there are $209, 242, 237, 161, 137, 131$ grade $7$--$12$ students respectively. We found no statistically significant correlations between any pair of these covariates. The maximal degree is $32$, while the average degree is $10$ and the median degree is $9$.

To determine whether the local structure of $G$ is a predictor of gender and race, we test whether students' rooted counts are significant predictors of the covariates available. We focus on $\TexttriangleR\,$ and $\!\TextsquareR\ $, and thus use as predictor $\big(s_i(G)\big)_{i\in G} = \smash{\big(s_i(\TexttriangleR\ ,G),s_i(\!\TextsquareR\ ,G)\big)_{i\in G}\in\mathbb{R}^{n\times 2}}$. We choose $\TexttriangleR\,$ and $\!\TextsquareR\ $ as: using the smaller subgraphs presented little power; $\TexttriangleR\,$ and $\!\TextsquareR\ $ are not subgraphs of each other, which we observed reduces the correlation of the counts; they are prominently used to characterize local network properties~\cite{przulj2007,ali2014alignment}.

\subsection{Goodness-of-fit test} 
Before analyzing $G$ at the level of each student---through using the $s_i(G)$ to predict the covariates---we first test for $G$ to be a realization of an inhomogeneous random graph. Indeed, while $G$'s global features of sparsity, community structure and degree distribution have successfully been modeled by our null model~\cite{olhede2013network}, our results will now allow us to test if our null is also sufficient to account for local features of $G$. We do so by testing independently for each $i\in G$ that $s_i(G)$ is consistent with a kernel estimate. We choose such a vertex-specific approach, instead of a global one relying on averages over vertices, for two key reasons: first, a global test would be less powerful, as averages of rooted count are asymptotically normal under other, more general, null models~\cite{coulson2016poisson, orbanz2017sub}; second, this will enable us to determine which vertices cause the lack of fit.

Out goodness-of-fit test takes places in three steps: i) the kernel and latent variates are estimated using maximum likelihood, yielding $\hat\kappa$ and $(\hat x_i)_i$; ii) we use Monte-Carlo integration methods with $\hat\kappa$ and $(\hat x_i)_i$ to compute the mean $\hat \mu_i$ and variance $\hat\sigma_i$ of each $s_i(G)$ under the null; iii) we test across $i$ that the $\hat t_i = \smash{\hat\sigma_i^{_{-1/2}}(s_i(G)-\hat \mu_i)}$ are realizations of the bivariate standard Normal distribution. Note that the approach only makes sense since we know from~\cref{motif-density-limit} that all the $t_i$ tend to the same distribution. Further details, especially on the estimation of $\kappa$, are provided in \cref{metho}. Two key points are that: the plug-in estimators $\hat \mu_i$ and $\hat\sigma_i$ may be used for standardization because the variation of the rooted densities is at least as large as that of $\hat\kappa$ and $(\hat x_i)_i$~\cite{lei2015,chatterjee2015,sarkar2015}; the dependence between the $s_i(G)$ is such that using Bonferroni correction yields a too conservative test, and we present an alternative correction in \cref{metho}.

\begin{figure}[t]
\centering
\vspace{-\baselineskip}
\includegraphics[width=.6\textwidth]{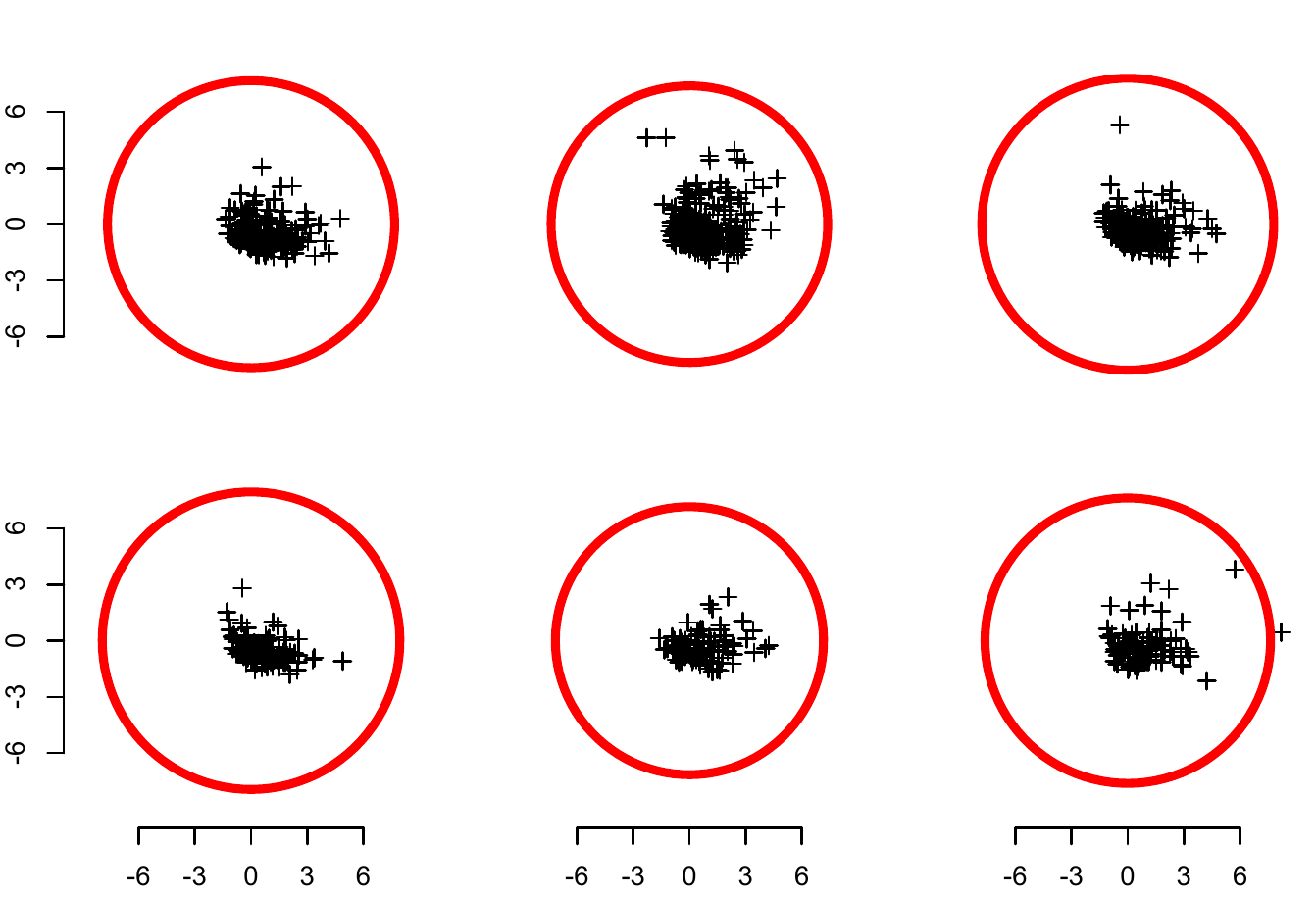}
\caption{Goodness-of-fit within grades. Crosses are the standardized rooted density vectors $\smash{\hat t_i(G_g)}$, where from top to bottom, and left to right, $g$ ranges $7$--$12$. The red circle is the confidence region at the $10\%$ level (see \cref{metho} for a description of our algorithm to estimate the radius of this circle). Only $G_{12}$ is rejected, with one outlier vertex.
\vspace{-\baselineskip}}
\label{Sim}
\end{figure}

Using this method we reject the hypothesis that $G$ is a realization of an inhomogeneous random graph: $43$ $t_i$-s are too far from the origin. However, we note that the vertices causing the lack of fit have more edges spanning grades than most. Thus, we refine our analysis to the graphs formed by students in the same grade; the $G_g$ for $g\in\{7,\dots,12\}$. There, using the same method we reject the null only for $G_{12}$ (see \cref{Sim}).

\subsection{Regression} We now use the the rooted densities to predict gender and race. Specifically, following the outcome of the tests, we consider as predictor $\big(s_i(G_{g(i)})\big)$, where $i$ ranges the students in grade less than $11$, and $g(i)$ is the grade of student $i$. Then, we predict the gender and race covariate via a binomial linear regressions with a logit link function. Following \cref{ml-estimation}, this model may be estimated by maximum likelihood and analyzed in the standard fashion. There, we can make the following observations (always at the $10\%$ level): %
i) While white students do not have an average degree larger than black students, they present significantly larger rooted triangle and square densities; %
ii) By conditioning on race, in both cases we find that the likelihood of a student being female increases with the triangle density and shrinks with the squares density, while for male students, it is the reverse; %
iii) The triangle and square densities do not evolve with grade. These findings are robust to removing the vertices causing the lack of fit in $G$.

We note that when using $\big(s_i(G_{g(i)})\big)$ in a regression model, we do not rely on $\hat\kappa$ and $(\hat x_i)_i$. This makes the approach independent of any kernel estimate, vertex clustering, or group label assignment, but also requiring only negligible computation beyond that of evaluating the rooted counts.

To conclude, we have shown that: i) the whole network cannot be modeled by our null; ii) yet, local network structure changes with vertex covariate in a provably predictable way. Expanding our analysis to more social networks would enable rich and general inference regarding the scale at which conditional independence is a relevant null, as well as the link between local network structures and gender and race, ultimately allowing to explore the global consequence of group specific behavior on the social graph.

\section{Discussion}\label{Discussion}

We contribute the results, and associated methods, that enable the study of large networks at the level of each vertex. This in the exact same range of density regimes as group level inference is possible. The combinatorial proof techniques introduced constitute a basis opening new venues for the study of local properties of random graphs. We expect that our proofs extend to: directed or weighted graphs, unbounded kernels as long as $s_U(F,\kappa)$ exists, as well as to rooted subgraphs with multiple roots. 

\begin{figure}[t]
	\centering
	\setlength{\mylength}{\linewidth}
	\scalebox{.5}{\input{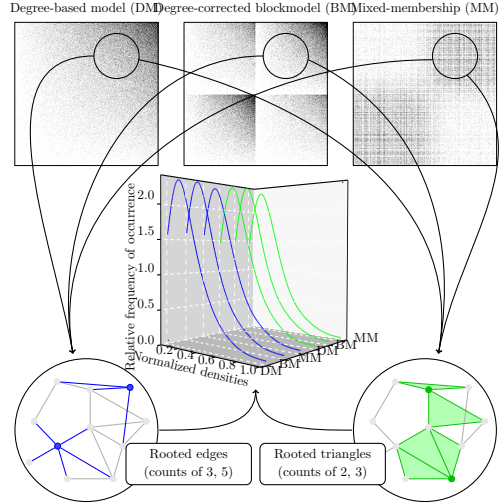}}%
	\caption{Three different kernels that lead to similar rooted edge and triangle densities. We consider three networks ensuing from three kernels and having substantially different community structures: no communities, sharply separated communities, and overlapping communities. After normalizing the rooted densities to account for their different scales, we observe that the six obtained distributions are very similar across kernels and densities.}
	\label{ComunityMotif}
\end{figure}

Following our findings, it appears that inhomogeneous random graphs---which are recognized as sufficiently flexible to fully account for the three key large scale features of real-world networks that are sparsity, community structure and heavy tailed degree distribution~\cite{olhede2013network}---do not induce rich local behaviors. Indeed, after standardization, rooted densities are almost i.i.d normal, both across vertices and subgraphs. This observation parallels the results presented in~\cite{chatterjee2013, Seshadhri5631}. We explore the idea further in \cref{ComunityMotif}, where we show that rooted densities density, and community structure can be, to a large extent, dissociated under our null model.

This makes our methods especially useful, because they therefore uncover a new layer of information to be extracted from network data. However, it also strains the modeling capabilities of the null model. The random graph models discussed in~\cite{Miller2009percolation,Newman2009clustering,Karrer2010subgraphs,Bollobas2011clustering} generate richer local features. However, these models all take local counts as input, and thus cannot be used to infer the links between local counts and vertex covariates. Therefore, the problem of proposing a framework able to induce rich local behavior remains open.

\appendix
\makeatletter
\renewcommand{\@seccntformat}[1]{
	APPENDIX~{\csname the#1\endcsname}:\hspace*{1em}}
\makeatother
\setcounter{Theorem}{0}%
\renewcommand{\theTheorem}{\thesection.\arabic{Theorem}}
\setcounter{Remark}{0}%
\renewcommand{\theRemark}{\thesection.\arabic{Remark}}
\setcounter{Lemma}{0}%
\renewcommand{\theLemma}{\thesection.\arabic{Lemma}}
\setcounter{Proposition}{0}%
\renewcommand{\theProposition}{\thesection.\arabic{Proposition}}
\setcounter{Definition}{0}%
\renewcommand{\theDefinition}{\thesection.\arabic{Definition}}
\setcounter{Corollary}{0}%
\renewcommand{\theCorollary}{\thesection.\arabic{Corollary}}
\setcounter{figure}{0}%
\renewcommand{\thefigure}{\thesection.\arabic{figure}}
\makeatletter
\renewcommand\subsection{\@startsection {subsection}{1}{\parindent}%
	{\medskipamount}%
	{-10pt}%
	{\subsection@shape}}
\def\subsection@shape{\normalsize\itshape}
\def\subsection@prefix{\normalshape}
\makeatother
\newcommand{\ThmLoc}{\ref{motif-density-limit}}
\section{Proof of Theorem~\protect\ThmLoc{}}
\label{proof-motif-density-limit}

\subsection{Number of rooted copies}
In a first instance, the core object of our study will be the number of copies of a rooted graph within a larger graph. Therefore, for a fixed rooted graph $F=[F,v]$ and a graph $G$, we first introduce the number of rooted isomorphic copies of $F$ with root at a fixed vertex in $G$, and provide examples in \cref{X-F-examples}.

\begin{Definition}[Number of rooted isomorphic copies]
Fix a rooted graph $F=[F,v]$ and a graph $G$. We denote $X_F(G,i)$ the number of rooted isomorphic copies of $F$ rooted at $i\in G$; i.e,
\[
X_F(G,i) = \#\left\{F'\subset G : i\in F' \text{ and } [F',i]\equiv F\right\},
\]
with $[F',i]\equiv F$ if there is an adjacency preserving isomorphism from $F'$ to $F$ that maps $i$ to $v$, and $F'\subset G$ spans the non-automorphic subgraphs of $G$.
\end{Definition}

As we count non-automorphic subgraphs of $G$, we have that $X_F(K_n,i) = (n-1)_{|F|-1}/\mathrm{aut}(F)$, with $K_n$ the complete graph, $(n)_k = n(n-1)\cdots(n-k+1)$ the falling factorial, and $\mathrm{aut}(F)$ the order of the automorphism group of $F$ (under $\equiv$.) The overall number of copies of $F$ in $G$, usually denoted $X_F(G)$, is then proportional to the sum over $i\in G$ of the $X_F(G,i)$.

To begin our analysis we consider how the product of the number of rooted isomorphic copies of two rooted graphs $F_1$ and $F_2$ behaves. Therefore, we evaluate $X_{F_1}(G,i)X_{F_2}(G,i)$ and write
\begin{align}
\nonumber
X_{F_1}(G,i)X_{F_2}(G,i)
&= \sum_{i\in F_1'\subset G}\II{\{[F_1',i]\equiv F_1\}}
\sum_{i\in F_2'\subset G}\II{\{[F_2',i]\equiv F_2\}}\\
\label{rooted-copies-product-1}
&= \sum_{\substack{i\in F_1'\subset G\\i\in F_2'\subset G}}\II{\{[F_1',i]\equiv F_1\}}\II{\{[F_2',i]\equiv F_2\}}.
\end{align}
To reformulate this sum in a more concise way, we index all the unlabeled rooted graphs that can be formed by two graphs $F_1'$ and $F_2'$ containing $i$ and such that $[F_1',i]\equiv F_1$ and $[F_2',i]\equiv F_2$. This yields the following definition and subsequent lemma.

\begin{figure}[t]
{\centering{
\begin{tikzpicture}[auto,thick, scale=.75]
		\tikzstyle{pop}=[circle, color=black,fill=black!30,
						inner sep=0.1pt, minimum width=10pt]
		\tikzstyle{popi}=[circle, draw, color = black, fill=black,
						inner sep=0pt, minimum width=5pt]
		\tikzset{pil/.style={very thick,color=black,shorten >=2.25pt}}
		\draw {(0:0) node[popi] {} edge[pil] (90:2) node[popi] {}};
		\draw {(90:2) node[popi] {} edge[pil] (90+36:3) node[popi] {}};
		\draw {(0:0) node[popi] {} edge[pil] (330:2) node[popi] {}};
		\draw {(330:2) node[popi] {} edge[pil] (30:1.5) node[popi] {}};
		\draw {(330:2) node[popi] {} edge[pil] (330-36:3) node[popi] {}};
		\draw {(0:0) node[popi] {} edge[pil] (330-36:3) node[popi] {}};
		\draw {(90:2) node[popi] {} edge[pil] (30:1.5) node[popi] {}};
		\draw {(210-36:3) node[popi] {} edge[pil] (90+36:3) node[popi] {}};
		\draw {(210+36:3) node[popi] {} edge[pil] (330-36:3) node[popi] {}};
		\draw {(30:1.5) node[popi] {} edge[pil] (0:0) node[popi] {}};		
		\draw {(0:0) node[popi] {} edge[pil] (210:2) node[popi] {}};
		\draw {(36+210:3) node[popi] {} edge[pil] (210:2) node[popi] {}};
		\draw {(210-36:3) node[popi] {} edge[pil] (210:2) node[popi] {}};
		\draw {(210:3.75) node[popi] {} edge[pil] (210:2) node[popi] {}};
		\draw {(330-36:3) node[popi] {} edge[pil] (210:2) node[popi] {}};
		\draw{(210:2) node[pop] {$j$}};
	
		\draw {(45:3) node[popi] {} edge[pil] (90:2) node[popi] {}};
		\draw {(90+36:3) node[popi] {} edge[pil] (45:3) node[popi] {}};
		\draw {(330:2) node[popi] {} edge[pil] (45:3) node[popi] {}};
		\draw{(45:3) node[pop] {$i$}};
		\draw{(90:2) node[popi] {}};
	\end{tikzpicture}
	\caption{Example of rooted graph counting. Call $G$ the plotted graph, then we have:
	$X_{\protect\TexttriangleR}\,(G, i) = 1$,
	$X_{\protect\TextcherryR}\,(G, i) = 8$,
	$X_{\protect\TextdiamondR}\,(G, i) = 0$,
	$X_{\protect\TextsquareR}\ (G, i) = 2$,
	$X_{\protect\TexttriangleR}\,(G, j) = 2$,
	$X_{\protect\TextcherryR}\,(G, j) = 9$,
	$X_{\protect\TextdiamondR}\,(G, j) = 1$,
	$X_{\protect\TextsquareR}\ (G, j) = 2$.
	\label{X-F-examples}}}}
\end{figure}

\begin{Definition}[Overlapping copies~\protect\citep{rucinski1988small}]
\phantomsection\label{overlapping-copies}
For two rooted graphs $F_1$ and $F_2$, we write $\mathcal{H}_{F_1,F_2}$ the set of unlabeled rooted graphs that can be formed by two copies of $F_1$ and $F_2$, and $c_H$ the number of ways a given $H=[H,v]\in\mathcal{H}_{F_1,F_2}$ can be built from copies of $F_1$ and $F_2$:
\[\begin{dcases}
\mathcal{H}_{F_1,F_2} &= 
	\left\{[H,v]\subset K_{|F_1|+|F_2|}\,: 
		\parbox{.42\textwidth}{\centering
			$\exists [F'_1,v],[F'_2,v]\subset K_{|F_1|+|F_2|}$ s.t.,\\[.1\baselineskip]
			$[F_1',v]\equiv F_1, [F'_2,v]\equiv F_2$,\\[.1\baselineskip]
			and $H = F_1'\cup F_2'$}\right\}/\equiv\\
c_H &= \#\left\{\big([F_1',v],[F_2',v]\big)\subset [H,v]\,:
	\parbox{.33\textwidth}{\centering
		$[F_1',v]\equiv F_1, [F_2',v]\equiv F_2$\\[.1\baselineskip]
		and $H= F_1'\cup F_2'$}\right\}.
\end{dcases}\]
\end{Definition}

If $H$ is obtained from copies of $F_1$ and $F_2$ overlapping only at the root, then $c_H = \mathrm{aut}(H)/\mathrm{aut}(F_1)\mathrm{aut}(F_2)$~\cite[p4]{rucinski1988small}. No formula exists in the general case; see \cref{c-H-examples}.

\begin{figure}[t]
\begin{center}\begin{equation*}\begin{array}{rlll}
\mathcal{H}_{\TexttriangleR\ ,\,\TexttriangleR}\!\!\!&= \left\{\!
	\TextbowtieR\, ,
	\TextdiamondR\ ,
	\TexttriangleR\,\right\};&\!\!\!
	c_{\TextbowtieR}(\TexttriangleR\ ,\TexttriangleR\,)=2,&\!
	c_{\TextdiamondR}\,(\TexttriangleR\ ,\TexttriangleR\,)=2,\\&&\!\!\!
	c_{\TexttriangleR}\,(\TexttriangleR\ ,\TexttriangleR\,)=1.&\!\\[1.5\baselineskip]
	\ &\cline{1-2}&\ \\[-.2cm]
\mathcal{H}_{\TextcherryR\ ,\,\TextcherryR}\!\!\!&= \left\{\!
	\TextDoubleTwoPathsR\, ,
	\TextsquareR\ ,
	\TextshovelR\ ,
	\TexttrypodR\ ,
	\TexttriangleR\ ,
	\TextcherryR\,\right\};&\!\!\!
	c_{\TextDoubleTwoPathsR}(\TextcherryR\ ,\TextcherryR\,) = 2,&\!
	c_{\TextsquareR}\ (\TextcherryR\ ,\TextcherryR\,) = 2,\\&&\!\!\!
	c_{\TextshovelR}\ (\TextcherryR\ ,\TextcherryR\,) = 2,&\!
	c_{\TexttrypodR}\ (\TextcherryR\ ,\TextcherryR\,) = 2,\\&&\!\!\!
	c_{\TexttriangleR}\,(\TextcherryR\ ,\TextcherryR\,) = 2,&\!
	c_{\TextcherryR}\,(\TextcherryR\ ,\TextcherryR\,) = 1.\\
	\ &\cline{1-2}&\ \\[-.2cm]
\mathcal{H}_{\TexttriangleR\ ,\,\TextcherryR}\!\!\!&= \left\{\!
	\TextTriangleTwoPathsR\, ,
	\TextdiamondR\ ,
	\TextshovelR\ ,
	\TexttriangleR\,\right\};&\!\!\!
	c_{\TextTriangleTwoPathsR}(\TexttriangleR\ ,\,\TextcherryR\,) = 1,&\!
	c_{\TextdiamondR}\,(\TexttriangleR\ ,\TextcherryR\,) = 2,\\&&\!\!\!
	c_{\TextshovelR}\ (\TexttriangleR\ ,\TextcherryR\,) = 1,&\!
	c_{\TexttriangleR}\,(\TexttriangleR\ ,\TextcherryR\,) = 2.\!
\end{array}\end{equation*}\end{center}
\caption{Examples of $\mathcal{H}_{F_1,F_2}$ and $c_H$. We add only within this figure the dependence of $c_H$ in $F_1, F_2$. The first element of each $\mathcal{H}_{F_1,F_2}$ is $F_1F_2$ (see \cref{gluing-product}). The $c_H$ are obtained by counting the number of ways $H$ may be covered by one rooted copy of $F_1$ and $F_2$ each. The $c_H$ may also be obtained inductively by using the implicit preorder in $\mathcal{H}_{F_1, F_2}$ as follows: i) write $F_1\subset F_2$ if $X_{F_1}(F_2)>0$; ii) for the smallest elements of the poset $(\mathcal{H}_{F_1, F_2}, \subset)$, say the $H_{1t}$, we have $c_{H_{1t}} = X_{F_1}(H_{1t})X_{F_2}(H_{1t})$ by \cref{rooted-copies-product}; iii) for the smallest elements of $(\mathcal{H}_{F_1, F_2}\setminus\{H_{1t}\}_t, \subset)$, say the $H_{2t}$, we have $c_{H_{2t}} = X_{F_1}(H_{2t})X_{F_2}(H_{2t}) - \sum_s c_{H_{1s}}X_{H_{1s}}(H_{2t})$ again by \cref{rooted-copies-product}; and so on until termination.\label{c-H-examples}}
\end{figure}

\begin{Lemma}[Rooted copies pairwise interaction]
\phantomsection\label{rooted-copies-product}
Fix two rooted graphs $F_1,F_2$ and a graph $G$. Then, for any $i\in G$,
\[
X_{F_1}(G,i)X_{F_2}(G,i) = 
\sum_{H\in\mathcal{H}_{F_1,F_2}}c_HX_H(G,i).
\]\vspace{-\baselineskip}
\end{Lemma}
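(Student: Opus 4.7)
The plan is to start from the expanded product \eqref{rooted-copies-product-1} and regroup the summands according to the rooted isomorphism class of the union $F_1' \cup F_2'$. Any ordered pair $(F_1', F_2')$ of labeled subgraphs of $G$ with $i \in F_1' \cap F_2'$, $[F_1', i] \equiv F_1$ and $[F_2', i] \equiv F_2$ uniquely determines a labeled rooted subgraph $F_1' \cup F_2' \subset G$ containing $i$, and by the very definition of $\mathcal{H}_{F_1, F_2}$ in \cref{overlapping-copies}, the rooted isomorphism class $[F_1' \cup F_2', i]$ is an element of $\mathcal{H}_{F_1, F_2}$. So I would partition the pairs indexing \eqref{rooted-copies-product-1} by this class, writing
\[
X_{F_1}(G,i)X_{F_2}(G,i) = \sum_{H \in \mathcal{H}_{F_1,F_2}} \#\bigl\{(F_1', F_2') : i\in F_1'\cap F_2',\ [F_j',i]\equiv F_j \text{ for } j=1,2,\ [F_1'\cup F_2', i]\equiv H\bigr\}.
\]

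The next step is to evaluate the inner cardinality for a fixed $H$. I would construct each contributing pair in two stages: first pick a labeled rooted copy $H'\subset G$ of $H$ at $i$ (contributing a factor $X_H(G,i)$), then choose, inside this fixed $H'$, a decomposition $H' = F_1' \cup F_2'$ with $[F_j', i] \equiv F_j$. The number of such decompositions equals $c_H$, since $c_H$ is defined as the count of $(F_1', F_2')$ covers of the abstract rooted graph $[H,v]$, and this number is invariant under the choice of representative in $G$: any isomorphism between two labeled copies of $[H,v]$ transports admissible covers bijectively. Different choices of $H'$ give disjoint pairs, because $F_1' \cup F_2'$ is recovered from the pair as a labeled subgraph of $G$, so no double counting occurs. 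Combining the two stages yields $c_H X_H(G,i)$ pairs per class, and summing over $H \in \mathcal{H}_{F_1, F_2}$ delivers the lemma.

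I do not anticipate a real obstacle here: the statement is a structural combinatorial identity at fixed $G$ and $i$, with no randomness or asymptotics involved. The only point requiring mild care is checking that the constants $c_H$ are genuine isomorphism-class invariants and that the two-stage construction is a bijection onto the set of indexing pairs, but both are immediate from the definitions.
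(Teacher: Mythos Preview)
Your proposal is correct and follows essentially the same approach as the paper: partition the pairs $(F_1',F_2')$ in~\eqref{rooted-copies-product-1} by the rooted isomorphism class of $F_1'\cup F_2'$, then count the pairs in each class as $c_H X_H(G,i)$. Your write-up is slightly more explicit than the paper's about why $c_H$ is an isomorphism invariant and why the two-stage construction is a bijection, but the argument is the same.
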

\begin{proof}
This proofs consists in rewriting~\eqref{rooted-copies-product-1} using \cref{overlapping-copies}. We first note that for each $F_1',F_2'$ inf~\eqref{rooted-copies-product-1}, $\II{\{[F_1',i]\equiv F_1\}}\II{\{[F_2',i]\equiv F_2\}}=1$ if and only if there exists a rooted graph $H\in\mathcal{H}_{F_1,F_2}$ such that $[F_1'\cup F_2',i] \equiv H$. Therefore we can reindex the sum in~\eqref{rooted-copies-product-1} as follows:
\begin{align*}
X_{F_1}(G,i)X_{F_2}(G,i)
&= \sum_{i\in F_1',F_2'\subset G}\II{\{\exists H\in\mathcal{H}_{F_1,F_2}\,:\, [F_1'\cup F_2',i]\equiv H\}}\\
&= \sum_{H\in\mathcal{H}_{F_1,F_2}}\ \sum_{i\in F_1',F_2'\subset G}\II{\{[F_1'\cup F_2',i]\equiv H\}}.
\end{align*}
We now note that by definition of $c_H$, for each copy of $H$ in $G$ rooted at $i$, there will be $c_H$ pairs $F_1',F_2'$ of copies of $F_1$ and $F_2$ rooted at $i$ in $G$ such that $[F_1'\cup F_2',i] = H$. Therefore, we can simplify the sum above to obtain,
\begin{align*}
X_{F_1}(G,i)X_{F_2}(G,i)
&= \sum_{H\in\mathcal{H}_{F_1,F_2}} c_H\sum_{i\in H'\subset G}\II{\{[H',i]\equiv H\}}\\
&= \sum_{H\in\mathcal{H}_{F_1,F_2}} c_HX_H(G,i),
\end{align*}
yielding the sought-after result.
\end{proof}

A key consequence of \cref{rooted-copies-product} is that the quantities $X_F(G,i)$ for all $F$ are sufficient to express products of the form $X_{F_1}(G,i)X_{F_2}(G,i)$. Indeed, although no simple formula exists to compute $\mathcal{H}_{F_1,F_2}$~\citep{rucinski1988small}, it is a functions only of $F_1$ and $F_2$, but not of $G$. Then, through an induction argument, the quantities $X_F(G,i)$ for all $F$ are sufficient to express higher order products of the form $X_{F_1}(G,i)\cdots X_{F_m}(G,i)$ for some $m>1$. As we shall see below, in the case where $G$ is a random graph, a similar phenomena occurs: describing the first moment of $X_F(G,i)$ for all $F$ is sufficient to describe the higher order moments of the rooted counts, and ultimately the joint distribution of the rooted graph counts.

\subsection{Rooted copies in inhomogeneous random graphs}
We now consider $X_{F}(G_n,i)$ in the setting where $G_n$ is a sequence of growing inhomogeneous random graphs (see \cref{model}). Our focus will be the limiting distribution of $X_F=X_F(G_n, i)$. 

In what follows, we first uncover the limiting distribution of $X_F$ conditionally on $x_i=x$ for some $x\in[0,1]$. Therefore, we will always implicitly consider counts in $G_n$ rooted at some vertex $i$, and $(G_n)$ as being a sequence of inhomogeneous random graphs where the latent variable $x_i$ is fixed to be equal to a given value $x\in[0,1]$; i.e. let $(G_n)\sim G(\rho,\kappa)|x_i=x$. Furthermore, we will omit, unless it is necessary, the dependence in $n$ of both $G_n$ and $\rho_n$.

We begin by presenting the first order properties of $X_F$, the proof of which is deferred to the \cref{proof-of-first-order}, where we also present slightly improved bounds:
\begin{Proposition}\phantomsection\label{first-order}
With $F$ a rooted graph and $G\sim G(\rho,\kappa)|x_i=x$,
\begin{align*}
\E X_F&=s_x(F,\kappa)\,\rho^{e(F)}X_F(K_n,i),\\
\var X_F&=\big(1+O(1/n)\big)\textstyle\sum_{H\in\mathcal{H}_{F,F}\setminus\{F^2\}} c_H\E X_H,
\end{align*}
where $s_x(F,\kappa) = \E \big[\prod_{pq\in F}\kappa(x_p,x_q)\,\vert\, x_i=x\big]$, and $F^2$ is the rooted graph obtained by taking the disjoint union of two copies of $F$ and identifying their roots. Then, with $m(F)=\max\{e(H)/(|H|-1)\,:\,v\in H\subset F, |H|>1\}$, if $n\rho^{m(F)}=\omega(1)$, we have that
\begin{equation}\label{hom-prop-eq-eq}
X_F = \E X_F\left(1+O_p\big(1/n\rho^{m(F)}\big)\right).
\end{equation}
Furthermore, for $F_1,F_2$ two labelled rooted graphs sharing their root, we have
\begin{equation}\label{motif-set-operations}
X_{F_1\cup F_2}=O_p\big(\E X_{F_1}\E X_{F_2}/\E X_{F_1\cap F_2}\big).
\end{equation}
\end{Proposition}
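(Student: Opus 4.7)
The plan is to establish the four claims in sequence, each by combining a direct combinatorial expansion of $X_F$ with the algebraic identity of \cref{rooted-copies-product}. For the first-moment formula, I would write $X_F=\sum_{F'\subset K_n:[F',i]\equiv F}\II\{F'\subset G_n\}$ and take expectations; conditioning on the latents $(x_j)$, each edge is an independent Bernoulli with parameter $\rho\kappa(x_p,x_q)$, so each summand contributes $\rho^{e(F)}s_x(F,\kappa)$ (the same value for every embedded copy, by exchangeability of the non-root latents). Multiplying by $X_F(K_n,i)$ yields the identity.

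For the variance, I would square the definition and apply \cref{rooted-copies-product} to get $X_F^2=\sum_{H\in\mathcal{H}_{F,F}}c_HX_H$. The element $F^2\in\mathcal{H}_{F,F}$, in which the two copies meet only at the root, is the unique one whose mean cancels $(\E X_F)^2$: using $s_x(F^2,\kappa)=s_x(F,\kappa)^2$ (conditional independence of the two disjoint copies given $x_i=x$), $\aut(F^2)=2\aut(F)^2$, and $c_{F^2}=2$, the first-moment formula gives $c_{F^2}\E X_{F^2}=(\E X_F)^2\,(n-1)_{2(|F|-1)}/(n-1)_{|F|-1}^2=(\E X_F)^2(1+O(1/n))$. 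Subtracting $(\E X_F)^2$ leaves the announced sum plus an $O((\E X_F)^2/n)$ residue, which is absorbed into the $(1+O(1/n))$ prefactor since an $H$ with $|F_1\cap F_2|=2$ already contributes a term of order $(\E X_F)^2/n$ (or larger) to that sum.

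The concentration \eqref{hom-prop-eq-eq} then follows from Chebyshev's inequality. Writing $F'=F_1\cap F_2$ for each $H\in\mathcal{H}_{F,F}\setminus\{F^2\}$, the first-moment formula yields $\E X_H/(\E X_F)^2=\Theta(1/(\rho^{e(F')}n^{|F'|-1}))$; the inequality $e(F')/(|F'|-1)\le m(F)$ together with $\rho\le 1$ gives $\rho^{e(F')}n^{|F'|-1}\ge(n\rho^{m(F)})^{|F'|-1}$, so each term is $O(1/(n\rho^{m(F)})^{|F'|-1})$, with the dominant contribution arising from $|F'|=2$. This produces the stated rate. Bound \eqref{motif-set-operations} is immediate from the first-moment formula and Markov's inequality: the inclusion-exclusion identities $|F_1\cup F_2|+|F_1\cap F_2|=|F_1|+|F_2|$ and $e(F_1\cup F_2)+e(F_1\cap F_2)=e(F_1)+e(F_2)$ align the exponents of $n$ and $\rho$ in $\E X_{F_1\cup F_2}$ and $\E X_{F_1}\E X_{F_2}/\E X_{F_1\cap F_2}$, while the boundedness of $\kappa$ from above and away from zero makes the residual kernel factors $O(1)$, after which Markov closes the argument.

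The hard part, I expect, will be the variance step: one must check that the $O((\E X_F)^2/n)$ correction from $c_{F^2}\E X_{F^2}-(\E X_F)^2$ is genuinely absorbed by the leading term of $\sum_{H\neq F^2}c_H\E X_H$, which requires locating an $H$ with exactly two shared vertices for every $F$ with $|F|>2$ and handling the boundary case $|F|=2$ (the rooted edge) separately. Verifying that it is precisely $m(F)$ — rather than an overlap-ratio restricted to a different family of subgraphs — that governs the concentration is the remaining subtlety.
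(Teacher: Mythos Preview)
Your proposal is correct and follows essentially the same route as the paper: the first moment by direct expansion and conditional independence (the paper's \cref{nb-copy-order}), the variance via \cref{rooted-copies-product} together with the identity $c_{F^2}\E X_{F^2}=(\E X_F)^2(1+O(1/n))$, concentration by Chebyshev, and \eqref{motif-set-operations} by Markov plus the inclusion--exclusion identities for $|\cdot|$ and $e(\cdot)$. The only organizational difference is that the paper packages the variance computation inside a slightly more general ``gluing homomorphism'' statement, $X_{F_1}X_{F_2}=c_{F_1F_2}X_{F_1F_2}(1+O_p(\epsilon_{F_1F_2}))$ (\cref{hom-prop}), and then specializes to $F_1=F_2=F$; your direct route to the variance is equivalent and, if anything, more transparent for this particular proposition.
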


\begin{Remark}[Computing the variance]\label{var-closed-form}
Although it is less amenable to the subsequent derivation, we note that:
\[
\var X_F = \big(1+O(1/n)\big) \sum_{H\in\mathcal{H}_{F,F}\setminus\{F^2\}}\ s_x(H,\kappa)\,\rho^{e(H)}\frac{(n)_{|H|-1}}{\aut(H)}.
\]
\end{Remark}
We now show how \cref{first-order}, with some additional manipulations, may be augmented to provide a description of any central moment of $X_F$. In turns---through method of moment arguments---we obtain a more precise understanding of the joint behavior of the $X_F$ in the limit of large graphs.

\begin{Theorem}\phantomsection\label{cross-mom}
Fix $k$ rooted graphs $F_1,\dots,F_k$. Let $G\sim G(\rho,\kappa) | x_i = x$ and assume that $\epsilon = 1/n\rho^{\max_{t\in[k]}m(F_t)}=o(1)$ and that $s_X(F_t,\kappa)>0$ for $t\in[k]$. Then, if $k$ is even,
\[
\E\prod_{t=1}^k\left(X_{F_t}-\E X_{F_t}\right)
= \big(1+O(\epsilon)\big)
\sum_{s\in [k]^{(2)}}
	\prod_{\{p,q\}\in s}\cov\left(X_{F_p},X_{F_q}\right),
\]
where $[k]^{(2)}$ is the set of all partitions of $[k]$ in subsets of size $2$. On the other hand, if $k$ is odd, $\E\prod_t\big(X_{F_t}-\E X_{F_t}\big) = O\big(\epsilon\prod_t\var(X_{F_t})^{1/2}\big)$.
\end{Theorem}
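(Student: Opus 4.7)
The plan is to establish a Wick/Isserlis-type identity for central moments by iteratively applying the pairwise product identity \cref{rooted-copies-product}, classifying the resulting gluings according to how the $k$ copies of the $F_t$ overlap beyond their shared root, and invoking a cancellation mechanism that eliminates singleton components. This is the rooted-subgraph analogue of the moment-cumulant relations underlying the classical method-of-moments proof of the CLT.

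Step 1 (Expansion and partitioning). Iterating \cref{rooted-copies-product} $k-1$ times yields $\prod_t X_{F_t} = \sum_H c_H^{F_1,\dots,F_k}\,X_H$, where $H$ ranges over rooted graphs obtainable as a union of copies of $F_1,\dots,F_k$ sharing their root, with possible further vertex identifications. To each realization I associate the partition $\pi$ of $[k]$ where $t\sim t'$ iff the copies of $F_t$ and $F_{t'}$ share a non-root vertex. Writing $\pi = \{B_1,\dots,B_m\}$, the graph $H$ decomposes at its root as a wedge $H = H_1\vee\cdots\vee H_m$, giving $|H|-1 = \sum_i(|H_i|-1)$ and $e(H) = \sum_i e(H_i)$. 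By \cref{first-order} the expectation factorizes across these wedge components, so for each $\pi$,
\[
\sum_{H:\pi(H)=\pi} c_H^{F_1,\dots,F_k}\,\E X_H \;=\; (1+O(\epsilon))\prod_{i=1}^m \Lambda_{B_i},
\]
where $\Lambda_B := \sum_{H':\pi(H')=\{B\}} c_{H'}^{\{F_t\}_{t\in B}}\,\E X_{H'}$ is the sum over \emph{connected} gluings of $\{F_t\}_{t\in B}$, and in particular $\Lambda_{\{t\}} = \E X_{F_t}$.

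Step 2 (Cancellation of singletons). Expanding the centered product via the binomial identity $\prod_t(X_{F_t}-\E X_{F_t}) = \sum_{S\subset[k]}(-1)^{|S|}\prod_{t\notin S}X_{F_t}\prod_{t\in S}\E X_{F_t}$, substituting Step 1 into each $\prod_{t\notin S}X_{F_t}$, and regrouping by the induced partition $\tilde\pi$ of $[k]$ (obtained from $\pi$ by adjoining singletons for the elements of $S$), the contribution of $\tilde\pi$ with singleton set $T$ reduces to
\[
\Big(\prod_{B\in\tilde\pi,\,|B|\geq 2}\Lambda_B\Big)\Big(\prod_{t\in T}\E X_{F_t}\Big)\sum_{S\subset T}(-1)^{|S|}.
\]
The inner sum vanishes unless $T = \emptyset$, so only partitions all of whose blocks have size $\geq 2$ survive. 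The $k=2$ case combined with the variance formula in \cref{first-order} yields $\Lambda_{\{p,q\}} = \cov(X_{F_p},X_{F_q})$, so the surviving sum is a genuine Wick-type expansion in connected gluings.

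Step 3 (Perfect pairings dominate, odd case, obstacle). For blocks of size $|B|=r$, the scaling in \cref{first-order} gives $\Lambda_B = O(\epsilon^{r-1}\prod_{t\in B}\E X_{F_t})$: any connected gluing of $r$ copies must contain at least $r-1$ non-root vertex identifications, each saving one vertex while respecting the $m$-balanced scaling. A surviving partition with block sizes $r_1,\dots,r_m\geq 2$ ($\sum r_i = k$) therefore contributes $O(\epsilon^{k-m}\prod_t\E X_{F_t})$. This is maximized by perfect matchings ($m = k/2$), giving $\Theta(\epsilon^{k/2}\prod_t\E X_{F_t})$, while every other surviving partition has $m < k/2$ and is thus at least a factor $\epsilon$ smaller. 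For even $k$ this delivers the stated Wick sum with relative error $O(\epsilon)$. For odd $k$ no perfect matching exists, every surviving partition has $m \leq (k-1)/2$, and converting the resulting bound using $\var X_{F_t}\asymp\epsilon(\E X_{F_t})^2$ produces the claimed $O(\epsilon\prod_t\sqrt{\var X_{F_t}})$ estimate. The main obstacle will be making the partition regrouping of Step 2 rigorous: one must verify that the multiplicities $c_H^{F_1,\dots,F_k}$ factor across wedge components exactly as needed for the moment-cumulant inversion, and control the $O(\epsilon)$ errors from Step 1 uniformly across the exponentially many partitions being summed — the precise odd-$k$ bound in particular requires tracking extra cancellation within connected gluings of odd-sized blocks rather than relying solely on the scaling of $\Lambda_B$.
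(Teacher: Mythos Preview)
Your cumulant-style program is natural and Steps~1 and~3 are in the right spirit, but Step~2 contains a genuine gap---one you flag as an obstacle but whose severity you underestimate. The inclusion-exclusion identity $\sum_{S\subset T}(-1)^{|S|}=0$ kills singleton blocks only if the factorization of Step~1 is \emph{exact}. It is not: $\E X_H$ carries a falling factorial $(n-1)_{|H|-1}$, and $(n-1)_{a+b}\neq (n-1)_a(n-1)_b$, so the identity $\sum_{H:\pi(H)=\pi}c_H\E X_H=\prod_i\Lambda_{B_i}$ holds only up to a multiplicative $(1+O(1/n))$. After the alternating sum over $S\subset T$ the leading terms cancel but these $O(1/n)$ errors need not, leaving for the all-singleton partition a residual of order $n^{-1}\prod_t\E X_{F_t}$. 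The main (pair-matching) term is $\Theta(\epsilon^{k/2}\prod_t\E X_{F_t})$, so whenever $n\epsilon^{k/2}\to0$---equivalently $n\rho^m\gg n^{2/k}$, in particular for any $k\ge4$ with $\rho$ bounded away from zero---the residual dominates and your estimate is vacuous. Said differently, your $\Lambda_B$ are \emph{not} the joint cumulants of the $X_{F_t}$ (they miss the falling-factorial corrections), so the moment--cumulant inversion you invoke does not apply to them; the identification $\Lambda_{\{p,q\}}=\cov(X_{F_p},X_{F_q})$ in Step~2 is itself only approximate.

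The paper circumvents this entirely by centering at the indicator level: writing $X_{F_t}-\E X_{F_t}=\sum_m\big(\II\{F_{tm}\subset G\}-\P[F_t\subset G]\big)$ and expanding the $k$-fold product over labeled copies. Then whenever some copy $F_{pm_p}$ shares only the root with the others, independence of edge-disjoint events gives $\E\prod_t\big(\II\{F_{tm_t}\subset G\}-\P[F_t\subset G]\big)=0$ \emph{exactly}. This exact zero replaces your approximate cancellation and is what allows the subsequent inductive bound (the paper's conditions $(\ast)$ and $(\ast\ast)$ on overlap structure) to go through uniformly in $\rho$. Your framework could in principle be salvaged by working with counts of labeled tuples---which do factor exactly across root-disjoint pieces---rather than with the aggregated $X_H$, but carrying this out amounts to reproducing the paper's argument.
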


\begin{proof}
This proof proceeds in three steps. First we develop a more tractable expression of $\E\prod_t(X_{F_t}-\E X_{F_t})$ in the form of a sum mirroring that of \cref{rooted-copies-product}. Then, we determine the leading term in that sum, by determining how the copies of the $F_t$ in $G$ predominantly overlap. Finally, we conclude in the same fashion as in the proof of \cref{first-order}.

\paragraph{Expanding $\E\prod_t(X_{F_t}-\E X_{F_t})$:}
Let $F_{t1},\dots,F_{tM_t}$ be the $M_t = X_{F_t}(K_n,i)$ labeled rooted copies of $F_t$ in $K_n$. Then, $X_{F_t}=\sum_{m=1}^{M_t}\II{\{F_{tm}\subset G\}}$, so that
\begin{align}
\nonumber
\prod_{t=1}^k\left(X_{F_t}-\E X_{F_t}\right)
&=\prod_{t=1}^k\left(\sum_{m=1}^{M_t}\II{\{F_{tm}\subset G\}}-\sum_{m=1}^{M_t}\P[F_{tm}\subset G]\right)\\
\nonumber
&=\prod_{t=1}^k\left(\sum_{m=1}^{M_t}\left(\II{\{F_{tm}\subset G\}}-\P[F_{tm}\subset G]\right)\right)\\
\label{cross-mom-1}
&=\sum_{m_1\in[M_1]}\sum_{m_2\in[M_2]}\cdots\sum_{m_k\in[M_k]}\prod_{t=1}^k \left(\II{\{F_{tm_t}\subset G\}}-\P[F_t\subset G]\right),
\end{align}
where the last step follows form expanding a product of sum into a sum of products, and using that for all $t$ and $m$, $\P[F_{tm}\subset G]=\P[F_t\subset G]$.

We now distinguish between $k$-tuples $(m_1,\dots,m_k)$ according to the unlabelled rooted graph induced by $\cup_t F_{tm_t}$. Therefore, we call $\mathcal{H}$ the set of unlabelled rooted graphs that can be formed by overlapping rooted copies of the rooted graphs $F_1,\dots, F_k$; i.e., generalizing \cref{overlapping-copies} we write
\[
\mathcal{H} = \left\{
[H,v]\subset K_{\sum_t\!|F_t|} : 
	\exists [F'_t,v] \ \mathit{ s.t. }\ \forall t, [F'_t,v]\equiv F_t\ \mathit{ and }\ H=\cup_t F'_t
\right\}/\equiv.	
\]
Furthermore, to simplify notation, for $H\in\mathcal{H}$ we write
\begin{equation}\label{cross-mom-1-5}
\tilde X_H =
\sum_{m_1\in[M_1]}\sum_{m_2\in[M_2]}\cdots\sum_{m_k\in[M_k]}\II\{\cup_tF_{tm_t}\equiv H\}\prod_{t=1}^k \left(\II{\{F_{tm_t}\subset G\}}-\P[F_t\subset G]\right)
\end{equation}
Resuming from~\eqref{cross-mom-1} we may therefore write:
\begin{equation}\label{cross-mom-2}
\E\prod_{t=1}^k\left(X_{F_t}-\E X_{F_t}\right)
 = 
\sum_{H\in\mathcal{H}}\E\tilde X_H.
\end{equation}
Note that the $F_{tm_t}$ forming $H$ may overlap in varied ways. As we eventually bound $\E\tilde X_H$ by $\E X_H$, only the resulting shape $H=\cup_tF_{tm_t}$ matters to our analysis, and how they overlap does not affect the following arguments. This observation is also why we can choose this notation.

\paragraph{Order of magnitude of $\E\tilde X_H$:}
We begin by controlling the order of magnitude of each term in the sum of~\eqref{cross-mom-2}. For each $H = \cup_tF_{tm_t}$, this is done by discussing how many vertices and edges of the $F_{tm_t}$ overlap.

First we note that if there exists $p$ such that $|F_{pm_p}\cap(\cup_{t\neq p}F_{tm_t})|\leq1$, then we must have that $F_{pm_p}\cap(\cup_{t\neq p}F_{tm_t}) = (\{i\},\emptyset)$ and $F_{pm_p}$ has all its vertices except its root disjoint from the $F_{tm_t}$. Since vertex disjoint edges occur independently in $G(n,\rho\kappa)|x_i=x$, we have
\begin{multline}\label{cross-mom-22}
\E\prod_{t=1}^k\left(\II{\{F_{tm_t}\subset G\}}-\P[F_t\subset G]\right)\\
= \left\{\E\prod_{t\neq p} \left(\II{\{F_{tm_t}\subset G\}}-\P[F_t\subset G]\right)\right\}\E\left(\II{\{F_{pm_p}\subset G\}}-\P[F_p\subset G]\right)
=0,\!\!\!\!\!\!
\end{multline}
as $\E\II{\{F_{pm_p}\subset G\}}=\P[F_p\subset G]$ by definition. Therefore, we need only consider $H = \cup_tF_{tm_t}$ such that each $F_{tm_t}$ overlap with at least one other $F_{tm_t}$. In this case, expanding the product using inclusion exclusion, we obtain
\begin{multline}
\label{cross-mom-3}
\E\prod_{t=1}^k\left(\II{\{F_{tm_t}\subset G\}}-\P[F_t\subset G]\right)\\=
\sum_{S\subset [k]} (-1)^{|S|}\E\left[\II{\{\cup_{t\not\in S}F_{tm_t}\subset G\}}\right]\prod_{t\in S}\P[F_t\subset G].
\end{multline}
From \cref{nb-copy-order}, we have that $\E\II{\{\cup_{t\not\in S}F_{tm_t}\subset G\}}=O(\rho^{e(\cup_{t\not\in S}F_{tm_t})})$ and $\prod_{t\in S}\P[F_t\subset G]=O(\rho^{\sum_{t\in S}e(F_t)})$. As, by assumption, we have $|(\cup_{t\not\in S}F_{tm_t})\cap (\cup_{t\in S} F_{tm_t})|>1$, it must be that $e(\cup_{t\not\in S}F_{tm_t})+\sum_{t\in S}e(F_t)\geq e(\cup_tF_{tm_t})$, and since $\rho<1$, we therefore have
\[
\E\II{\{\cup_{t\not\in S}F_{tm_t}\subset G\}}\prod_{t\in S}\P[F_t\subset G]= O\left(\rho^{e(\cup_tF_{tm_t})}\right)=O\left(\rho^{e(H)}\right).
\]
As a result, from~\eqref{cross-mom-1-5}, ~\eqref{cross-mom-3} and \cref{nb-copy-order}, we obtain
\begin{align}
\nonumber
\E \tilde X_H
&= O\left(\rho^{e(H)}\#\big\{(m_1,\dots,m_k)\,:\, \cup_tF_{tm_t}\equiv H\big\}\right)\\
\label{cross-mom-3-2}
&= O\left(\rho^{e(H)}X_H(K_n,i)\right)
= O\left(\E X_H\right).
\end{align}

\paragraph{Controlling the variance:}
Before we proceed to bound $\E X_H$, we first control the standard deviation $\var(X_{F_t})^{1/2}$. There, we note that from \cref{first-order},
\begin{equation}\label{cross-mom-5.5}
\var X_{F_t}
= \big(1+O(1/n)\big) \sum_{U\in\mathcal{H}_{F_t,F_t}\setminus\{F_t^2\}} c_U\E X_U.
\end{equation}
Then, recall from \cref{overlapping-copies} that for each $U$ in~\eqref{cross-mom-5.5} there are two overlapping copies of $F_t$, say $U_1$ and $U_2$, such that $U = U_1\cup U_2$. Thus, we may apply \cref{first-order} again to each $\E X_U$ in the sum above, and reindex the sum by the overlap $J = U_1\cap U_2$, and as $s_x(F_t,\kappa)>0$ gives us that $s_x(J, \kappa)>0$ and therefore $\E X_J>0$, we obtain
\[
\var X_{F_t}
= O\left(\sum_{J\subset F_t\,:\, |J|>1}\frac{\left(\E X_{F_t}\right)^2}{\E X_J}\right),
\]
and, as only a finite number of terms is in the sum and therefore can dominate it, we get
\[
\var(X_{F_t})^{1/2} = 
O\left(\sum_{J\subset F_t\,:\, |J|>1}\frac{\E X_{F_t}}{\sqrt{\E X_J}}\right).
\]
Furthermore, for every non-zero term in the right hand side of~\eqref{cross-mom-5.5}, we have from \cref{first-order} that $\E X_U = \Theta((\E X_{F_t})^2/\E X_J)$. Therefore that for any $J\subset F$ we have:
\begin{equation}\label{cross-mom-6}
\Omega\left(\frac{\E X_{F_t}}{\sqrt{\E X_J}}\right) = \var(X_{F_t})^{1/2} = O\left(\sum_{J\subset F_t\,:\, |J|>1}\frac{\E X_{F_t}}{\sqrt{\E X_J}}\right).
\end{equation}

\paragraph{Negligible terms in~\eqref{cross-mom-2}:}
We now show that if any $F_{pm_p}$ overlaps with more than one of the $\{F_{tm_t}\}_{t\neq p}$, then $\E X_H$ is a $\smash{O\big(\epsilon\prod_t\var(X_{F_t})^{_{{}^{1/2}}}\big)}$. To do so, we proceed by induction on $k$. In the following, we write that $\{F_{tm_t}\}_{t\in [l]}$ verifies: $(\ast)$ if all $F_{tm_t}$ overlap with at least another $F_{pm_p}$ $t\neq p$; $(\ast\ast)$ if all $F_{tm_t}$ overlap with at exactly one other $F_{pm_p}$ for $p\neq q$. With these definitions, our goal is to prove that
\begin{equation}\label{rec-obj}
\E X_{\cup_t F_{tm_t}}=
\begin{cases}
O\left(\epsilon\prod_t\var(X_{F_t})^{1/2}\right) & \text{if $\{F_{tm_t}\}$ verifies $(\ast)$ but not $(\ast\ast)$,}\\
O\left(\prod_t\var(X_{F_t})^{1/2}\right) & \text{if $\{F_{tm_t}\}$ verifies $(\ast\ast)$.}
\end{cases}
\end{equation}

We can now start our induction. For $k\leq 2$, there is nothing to do. Now fix $k\geq 3$ and assume that for all $l\leq k-1$~\eqref{rec-obj} holds:
\begin{itemize}
\item First, if $\{F_{tm_t}\}_{t\in [k]}$ verifies $(\ast\ast)$. Assume without loss of generality that $F_{1m_1}$ overlaps with $F_{2m_2}$. As vertex disjoint graphs occur independently, $\E X_{\cup_t F_{tm_t}} = \E X_{\cup_{t>2} F_{tm_t}}\E X_{F_{1m_1}\cup F_{2m_2}}$. Then, by \cref{first-order}, and with $J=F_{1m_1}\cap F_{2m_2}$, we get
\[
\E X_{F_{1m_1}\cup F_{2m_2}}=O\big(({\E X_{F_{1m_1}}}/{\sqrt{\E X_J}})({\E F_{2m_2}}/{\sqrt{\E X_J}})\big).
\]
Then, the induction assumption and~\eqref{cross-mom-6}, yield the result.
\item Let us now assume that $\{F_{tm_t}\}_{t\in [k]}$ verifies $(\ast)$ but not $(\ast\ast)$. Then, following~\cite[p7]{rucinski1988small}, there is $p\in[k]$ such that $\{F_{tm_t}\}_{t\in [k]\setminus\{p\}}$ verifies $(\ast)$. Echoing~\citet{rucinski1988small}'s argument, to see this imagine the hypergraph whose vertices are the vertices of $\cup_t F_{tm_t}$ and edges are the vertex-sets $V(F_{tm_t})$, $t\in[k]$. By assumption, there must be at least one connected component with at least three edges, and one of the edges in these components is $V(F_{pm_p})$.

This way we have defined a map from $k$-tuples verifying $(\ast)$ but not $(\ast\ast)$, and $(k-1)$-tuples verifying $(\ast)$. Furthermore, with $J=F_{pm_p} \cap \big(\cup_{t\neq p} F_{tm_t}\big)$, by this mapping every $(k-1)$-tuple is the image of $O(n^{|F_p| - |J|})$ $k$-tuples. Then, using our induction assumption,~\eqref{cross-mom-6}, and recalling that $\E X_J=\Omega(1/\epsilon)$ by construction, we obtain
\begin{align*}
\E X_{\cup_t F_{tm_t}}
&=O\left(n^{|F_p| - |J|}\rho^{e(F_p)-e(J)} \E X_{\cup_{t\neq p} F_{tm_t}}\right)\\
&=O\left((\E X_J)^{-1/2}\frac{\E X_{F_p}}{\sqrt{\E X_J}}\prod_{t\neq p}\var(X_{F_t})^{1/2}\right)\\
&=O\left(\epsilon\prod_t\var(X_{F_t})^{1/2}\right),
\end{align*}
which is the sought after result.
\end{itemize}

\paragraph{Re-factorizing~\eqref{cross-mom-2}:}
Recall that the aim of the above discussion is to determine the leading terms in~\eqref{cross-mom-2}. Equations~\eqref{cross-mom-22} and~\eqref{rec-obj} show that we can ignore $\E\tilde X_H$ in~\eqref{cross-mom-2} if $H=\cup_tF_{tm_t}$ is such that at least one $F_{tm_t}$ overlaps with none or two others. (Recall that $\tilde X_H$ was defined in~\eqref{cross-mom-1-5}.) This yields the result if $k$ is odd, since then the $F_{tm_t}$ cannot be organized in pairs, and we must have $\E\prod_{t=1}^k\big(X_{F_t}-\E X_{F_t}\big) =O\big(\epsilon\prod_{t=1}^k\var(X_{F_t})^{1/2}\big)$.

We now consider the case where $k=2r$ is even. In that case, any term of interest must be built from a partition of $[k]$ into sets of size 2 describing which pair of $F_{tm_t}$ overlap. Let $s=(s_1,\dots,s_r)$ be one such partition, and write $s_t=\{p_t,q_t\}$ for $t\in[r]$. Further, for $t\in[r]$, let $F_{p_t}F_{q_t}$ be the rooted graph obtained by taking the disjoint union of $F_{p_t}$ and $F_{q_t}$ and identifying their roots. With these notations, we have that the only subgraphs $H$ to consider are those obtained by taking the disjoint union of $H_1,\dots, H_r$, where each $H_t$ is in $\smash{\mathcal{H}_{F_{p_t},F_{q_t}}\setminus\{F_{p_t}F_{q_t}\}}$, and identifying their root. In that setting, for one such $H$, using as in~\eqref{cross-mom-22} that vertex disjoint subgraphs occur independently, we have
\[
\E\tilde X_H = \prod_{t=1}^r \E \tilde X_{H_t}.
\]
Therefore, with $\mathcal{H}_{s_t} = \mathcal{H}_{F_{p_t},F_{q_t}}\setminus\{F_{p_t}F_{q_t}\}$ we can write
\[
\E\prod_{t=1}^k\left(X_{F_t}-\E X_{F_t}\right)
=\big(1+O(\epsilon)\big)\!\!
	\sum_{(s_1,\dots,s_r)\in[k]^{(2)}}
	\sum_{H_1\in\mathcal{H}_{s_1}}
	\!\cdots\!
	\sum_{H_r\in\mathcal{H}_{s_r}}
	\prod_{t=1}^r
	\E\tilde X_{H_t}.
\]
There, we recognize the sum of products as the expansion of a product of sums, which factorizes as follows:
\begin{equation}\label{cross-mom-8}
\E\prod_{t=1}^k\left(X_{F_t}-\E X_{F_t}\right)
	=\big(1+O(\epsilon)\big)
		\sum_{(s_1,\dots,s_r)\in[k]^{(2)}}\ 
		\prod_{t=1}^r
		\sum_{H\in\mathcal{H}_{s_t}}
		\E\tilde X_H.
\end{equation}
To proceed, let us consider~\eqref{cross-mom-8} in the special case where $k=2$. Directly replacing in~\eqref{cross-mom-8}, and as $\#[2]^{(2)}=1$, we obtain,
\begin{equation}\label{variance}
\E\left[\left(X_{F_1}-\E X_{F_1}\right)\left(X_{F_2}-\E X_{F_2}\right)\right]
=\big(1+O(\epsilon)\big)\sum_{H\in\mathcal{H}_{\{1,2\}}}
\E\tilde X_H.
\end{equation}
Therefore, we can replace in~\eqref{cross-mom-8} each sums of the form $\smash{\sum_{H\in\mathcal{H}_{s_t}}\E\tilde X_H}$ by the covariance $\cov(X_{F_{p_t}},X_{F_{q_t}}) = \E[(X_{F_{p_t}}-\E X_{F_{p_t}})(X_{F_{q_t}}-\E X_{F_{q_t}})]$, which completes the proof.
\end{proof}

We finally describe the limiting behavior of the rooted subgraph densities introduced in \cref{motif-density-limit}. We reproduce here the statement of the theorem for the reader's convenience.

\thmlocal*

\begin{proof}
We first address the limit in probability before turning to the limit in distribution. In both cases we use~\citet[Theorem 1]{BickelLevina2012} showing that if $n\rho=\Omega(1)$, and with $\hat\rho={e(G)}/{e(K_n)}$, then
\begin{equation}\label{BCL_rho}
\hat\rho =\rho\left(1+O_p\big(n^{-1/2}\big)\right),
\end{equation}
which enables use to replace $\hat\rho$ by $\rho$ (as $n\rho = \Omega(n\rho^{m(F)})$ for any $F$).

\paragraph{Limits in probability:}
By definition, $s_i(F,G)=X_F(G,i)/\hat\rho^{e(F)}X_F(K_n,i)$. Then, if $n\rho^{m(F)}=o(1)$, from \cref{first-order}, $\E X_F(G,i)$ and $\var X_F(G,i)$ are $o(1)$, so that $\P[X_F(G,i)>0]=o(1)$. Therefore, $X_F(G,i)=0$ with high probability in the limit, so that $\P[s_i(F,G)>0]=o(1)$, and we obtain our first claimed result.

If $n\rho^{m(F)}=\omega(1)$, from \cref{first-order}, $X_F(G,i)\to_p \E X_F(G,i)$ and $\E X_F(G,i)= s_x(F,G)\rho^{e(F)}X_F(K_n,i)$. It follows that we have that $X_F(G,i)\to_p s_x(F,G)\rho^{e(F)}X_F(K_n,i)$; i.e., $\tilde s_i(F,G) \to_p s_x(F,\kappa)$. Thus $s_i(F,G) \to_p s_x(F,\kappa)$ from~\eqref{BCL_rho}, which is the result.

\paragraph{Limits in distribution: The univariate case:}
To obtain the limits in distribution we show convergence in moments, and conclude by using that the Normal distribution is uniquely determined by its moment sequence---see for instance~\cite[Theorem~2.a]{kleiber2013moment}. Thus, we consider the $k$-th standardized moment of $s_i(F,G)$. By homogeneity of rescaling and~\eqref{BCL_rho}, we have
\begin{align*}
\E s_i(F,G)^k
&=
\E \left(\frac{X_F-\E X_F}{\sqrt{\var X_F^{\phantom{a}}}}\right)^k\left(1+O_p\big(n^{-1/2}\big)\right).
\end{align*}
Therefore, we can work directly with the standardized moments of $X_F$. Now, from \cref{cross-mom}, setting $F_t=F$ for each $t\in[k]$, we obtain
\begin{equation}\label{thm1-mom-conv}
\E \left(\frac{X_F-\E X_F}{\sqrt{\var X_F^{\phantom{a}}}}\right)^k \to
\begin{cases}
\# [k]^{(2)}&\text{if $k$ is even,}\\
0 &\text{otherwise.}
\end{cases}
\end{equation}
Furthermore, if $k$ is even, $[k]^{(2)}= (k-1)(k-3)\cdots 1=(k-1)!!$, which corresponds to the $k$-th moment of a standardized Normal. Therefore, asymptotically in $n$, $s_i(F,G)$ has the same moments as a standardized Normal distribution, and by~\citep{kleiber2013moment} we obtain the result. The proof in the multivariate case proceeds similarly and is addressed in \cref{proof-motif-density-limit-SI}.
\end{proof}

\newcommand{\ThmEstim}{\ref{motif-density-clt}}
\section{Proof of Theorem~\protect\ThmEstim{}}
\label{ProofCLT}
\renewcommand{\theequation}{B.\arabic{equation}}

Up to this point we have only considered rooted densities at a fixed vertex in $G$. We now consider properties of the array $(s_i(F,G))_{i\in[n]}$, and show that the concentration results of \cref{first-order}, as well as some uniform bounds over vertices, permit us to prove \cref{motif-density-clt}.

We first observe that $(s_i(F,G))_{i\in[n]}$ is a row exchangeable triangular array: under our model, for any fixed $n$, $(s_i(F,G))_{i\in[n]}$ is a finitely exchangeable sequence; i.e., for any permutation $\varsigma$ of $[n]$, we have $(s_i(F,G))_{i\in[n]} \sim (s_{\varsigma(i)}(F,G))_{i\in[n]}$. One consequence of exchangeability is that for each $n$ all the $s_i(F,G)$ are identically distributed across $i\in[n]$.

However, exchangeability does not imply independence, and for instance the empirical mean of a row-wise exchangeable triangular array need not be asymptotically Normal~\citep{chernoff1958,blum1958central,weber1980martingale}. Therefore, in the proof that follows, the key ingredient are upper bounds for $\cov\!\big(s_i(F,G), s_j(F,G)\big)$.
\thmglobal*
\begin{proof}
The proof proceeds by approximating $f(s_i(F,G_n), y_i)$ by $f(s_{x_i}(F,\kappa), y_i)$ and $\E f(s_i(F,G_n), y_i)$ by $\E_{(x,y)\sim\mathcal{D}} f(s_x(F,\kappa), y)$, and then showing that the errors are negligible, allowing us to conclude. To simplify notation, we will write $s_i$ for $s_i(F,G_n)$, $s_{x_i}$ for $s_{x_i}(F,\kappa)$, and $x$ for $\{x_i\}_{i\in[n]}$, therefore making the dependence in $n$ implicit. Further, let $K$ be the Lipschitz constant for all $f(\cdot, y)$, $y\in\mathcal{Y}$.

We first approximate $f(s_i, y_i)$ by $f(s_{x_i}, y_i)$. To this end we observe that the $s_i-s_{x_i}$ form an exchangeable array, and therefore are identically distributed. This prompts us to introduce the $\delta_i$-s, independent copies of the $s_i-s_{x_i}$, so that the $\delta_i$-s are i.i.d., and the $s_{x_i}\!\!+\!\delta_i$ are i.i.d. and of the same (marginal) distribution as the $s_i$-s. This allows us to write:
\begin{multline}\label{CLT-proof-0}
\frac1n\sum_{i\in[n]}f(s_i, y_i)-\E f(s_i, y_i)
=\\ \underbrace{\frac1n\sum_{i\in[n]}f(s_{x_i}\!\!+\!\delta_i,y_i)-\E f(s_i, y_i)}_{A_n}
+ \underbrace{\frac1n\sum_{i\in[n]}f(s_i, y_i)-f(s_{x_i}\!\!+\!\delta_i,y_i)}_{B_n}.
\end{multline}

We observe that, under our assumptions, the random variable $A_n$ is the sum of centered independent and identically distributed random variables with finite second moments. Therefore, we can apply the Lindeberg-L\'evy central limit theorem (as $\var s_x >0$ and thus $\Sigma$ is well defined) obtain
\begin{equation}\label{CLT-proof-A}
\sqrt n A_n \xrightarrow[n\rightarrow\infty]{L}
\mathrm{Normal}\left(0,\Sigma\right).
\end{equation}

We now show that $\sqrt n B_n\to_p 0$, which will complete the proof. To do so we consider increasingly complex maps $f$: first we consider $f$ the projection on its first argument, then $f$ Lipschitz but independent of $y_i$, before treating the general case with $s_i$ being multivariate. As we did in the proof of \cref{motif-density-limit}, we note here that $\hat\rho = (e(G_n)/e(K_n)) = (1+O(1/n))\rho$ from~\cite[Theorem 1]{BickelLevina2012}, and in most cases we may replace $\hat\rho$ by $\rho$ without impacting our bounds.

\paragraph{Sum of rooted subgraph counts:}
In this case, we set $f(s_i, y_i) = s_i$, so that $B_n = \tfrac1n\sum_i (s_i-s_{x_i}-\delta_i)$. By construction and \cref{motif-density-limit} (which applies as $\gamma>m$), the $\delta_i$-s are i.i.d., of mean $0$, and second moments tending to $0$. Therefore we can apply the Lindeberg-L\'evy central limit theorem, and obtain that $\tfrac{1}{\sqrt{n}}\sum_i\delta_i \to_p 0$.

Remains therefore to prove that $\tfrac1n\sum_i (s_i-s_{x_i})\to_p 0$. We use the law of total variance to obtain:
\begin{multline}
\label{CLT-proof-B0} 
\var\left(\frac{1}{\sqrt{n}}\sum_i s_i-s_{x_i}\right) = \\
\E \var\left(\frac{1}{\sqrt{n}}\sum_i s_i-s_{x_i} \,\Big\vert\, x\right)
+\var\left(\frac{1}{\sqrt{n}}\sum_i \E\left[s_i-s_{x_i} \,\vert\, x\right]\right)
\end{multline}
We now consider each term of~\eqref{CLT-proof-B0} in succession. 

For the first term in~\eqref{CLT-proof-B0} we use that the $s_{x_i}$ are constant conditionally on $x$ and that the $s_i$ form an exchangeable array to write for any $i\neq j\in[n]$
\begin{equation}
\label{CLT-proof-B1} 
\E \var\left(\frac{1}{\sqrt{n}}\sum_i s_i-s_{x_i} \,\Big\vert\, x\right)
= \E \var\big(s_i\,\vert\, x\big) + n\E \cov\big(s_i, s_j\,\vert\, x\big).
\end{equation}
By \cref{motif-density-limit} (which applies as $\gamma>m$), the first term, the variance, tends to 0. Remains the covariance. There, by the same steps we used to prove first \cref{rooted-copies-product}, and then \cref{first-order}, this covariance is driven by the number of overlapping copies of $F$ rooted at $i$ and $j$ respectively. Here, because of the conditioning, edges form independently, and all terms that do not overlap at least over one edge are independent. Thus, similarly as in~\eqref{cross-mom-6}, and as in~\citet[Eq.~6.3]{BickelLevina2012} and~\citet[pp. 308--309]{nowicki1988}, we have
\[
\E\cov \left(s_i, s_j\,\vert\,x\right) = O\left(
\sum_{H\subset [F,v],\, v\not\in H,\, |e(H)|\geq1}n^{-|H|}\rho^{-e(H)}
\right).
\]
Since $n\rho^\gamma=\omega(1)$, we have that $n^{-|H|}\rho^{-e(H)}=O\big(1/n(n\rho^\gamma)\big)=o(n^{-1})$ for all $H$ in the sum. As the sum is finite, we have
\begin{equation}\label{CLT-proof-B2}
\E \var\left(\frac{1}{\sqrt{n}}\sum_i s_i-s_{x_i} \,\Big\vert\, x\right)=o(1).
\end{equation}

We now consider the second term in~\eqref{CLT-proof-B0}, $\sum_i \E[s_i-s_{x_i} \,\vert\, x]/\sqrt{n}$. As $\E[s_i-s_{x_i} \,\vert\, x] = \E[s_i \,\vert\, x]-s_{x_i}$ (since $s_{x_i} = \E[\,\prod_{pq\in F}\kappa(x_p,x_q) | x_i=x\,]$ is constant conditionally on $x$,) we have to two terms to manipulate: $\sum_i \E[s_i \,\vert\, x]$ and $\sum_i s_{x_i}$. To proceed we now show that $n^{-1}\sum_i \E[s_i \,\vert\, x]$ is a $U$-statistic, and that the $s_{x_i}$ are it's Hajek projections~\citep[Definition~15.2]{DasGupta2008asymptotic}, which will yield the result by~\citep[Theorem~15.1]{DasGupta2008asymptotic}. To see this, we observe that from~\cref{defn:rooted-count} and~\eqref{BCL_rho}, we have
\begin{align*}
s_i 
&= (1+O_p(1/n))\rho^{-e(F)}X_F(G, i)/X_F(K_n, i)\\
&= (1+O_p(1/n))\rho^{-e(F)}\aut(F)\left((n-1)_{|F|-1}\right)^{-1}\sum_{H\subset K_n\,:\, [H,i]\equiv F}\II\{H\subset G\}.
\end{align*}
As for any $H$ in the sum, $\E[\II\{H\subset G\} \,\vert\, x] = \rho^{e(F)}\prod_{pq\in H}\kappa(x_p,x_q)$, summing over $i$, we obtain:
\begin{align*}
n^{-1}\sum_i \E[s_i \,\vert\, x]
&= n^{-1}(1+O_p(1/n))\big((n-1)_{|F|-1}\big)^{-1}\sum_{i_1, \ldots, i_{|F|}\in [n]} \,\prod_{pq\in F}\kappa(x_{i_p}, x_{i_q})\\
&= (1+O_p(1/n))\binom{n}{|F|}^{-1}\sum_{i_1< \ldots< i_{|F|}\in [n]} \,\prod_{pq\in F}\kappa(x_{i_p}, x_{i_q}),
\end{align*}
where, up to a negligible factor, we recognize a $U$-statistic of order $|F|$ and of kernel $\prod_{pq\in F}\kappa(x_p, x_q)$. It then directly follows that $s_{x_i}$ are its Hajek projections, as $s_{x_i} = \E[\,\prod_{pq\in F}\kappa(x_p,x_q) | x_i=x\,]$, which by~\citep[Theorem~15.1]{DasGupta2008asymptotic}, (which applies as $s_x$ is twice integrable because it is bounded, and $\var s_x >0$ and the kernel is non-degenerate) yields that
\begin{equation}\label{CLT-proof-B3}
\sqrt{n}\left(\frac{1}{n}\sum_i \E[s_i \,\vert\, x] - \frac{1}{n}\sum_i s_{x_i}\right)\to 0.
\end{equation}

Together,~\eqref{CLT-proof-B0},~\eqref{CLT-proof-B2}, and~\eqref{CLT-proof-B3} yield that 
\begin{equation}\label{CLT-proof-C}
\sqrt{n}B_n = o_p(1),
\end{equation}
which is the sought-after result.

\paragraph{The univariate case:}
Here we consider the case where $f(s_i,X_i)$ does not depend on $Y_i$ but is $K$-Lipschitz in its first argument. Then, by the triangular inequality,
\begin{equation}\label{CLT-proof-B4}
\left|\sqrt{n}B_n\right|
\leq\frac{1}{\sqrt{n}}\sum_{i\in[n]}\left|f(s_i)-f(s_{x_i}+\delta_i)\right|
\leq K \frac{1}{\sqrt{n}}\sum_{i\in[n]}\left|s_i-s_{x_i}-\delta_i\right|.
\end{equation}
There, as $\E B_n = 0$ by construction, and writing $\Delta_i = s_i-s_{x_i}-\delta_i$, paralleling the previous segment of this proof, we have:
\begin{equation*}
\left|\sqrt{n}B_n\right|
= O_p\left(\sqrt{\var |\Delta_i|+n\cov(|\Delta_i|,|\Delta_j|)}\right).
\end{equation*}
Now, in the previous section we have shown that $\sum_i\Delta_i/\sqrt{n} \to_p 0$, and therefore we also have proved that $\var \Delta_i+n\cov(\Delta_i, \Delta_j) \to 0$. As $\var|\Delta_i|=O(\var\Delta_i )$ and $\cov(|\Delta_i|,|\Delta_j|) = O\big(\cov(\Delta_i,\Delta_j)\big) $ we obtain that
\begin{equation}\label{CLT-proof-B5}
\left|\sqrt{n}B_n\right|
\leq K \frac{1}{\sqrt{n}}\sum_{i\in[n]}\left|s_i-s_{x_i}-\delta_i\right| = o_p(1),
\end{equation}
which is the sought after result. For the covariance bound, while the result is intuitive in the sense that covariations are constrained by the absolute value, it can also be seen as an application of~\citet[Theorem~1]{cuadras2002}, where it is shown that $f: \mathbb{R}\to\mathbb{R}$ with a bounded derivative and for two real valued random variables $U$, $V$ such that $\big(f(U), f(V)\big)$ has a bounded second moment, we have that $\cov(f(U), f(V)) < O\big(\cov(U,V)\big)$.

\paragraph{The general case:}
In this case we have $F=(F_1,\dots,F_k)$ for $k\geq 1$, and we naturally expand $s_i$, $s_{x_i}$ to being $k$-variate vectors. Using the subadditivity of the square root, we observe that
\begin{align*}
\|\sqrt{n}B_n\|_2
&\leq K \frac{1}{\sqrt{n}}\sum_{i\in[n]} \|s_i-s_{x_i}-\delta_i\|_2\\
&\leq K\sum_{t\in[k]}\frac{1}{\sqrt{n}}\sum_{i\in[n]}\left|s_i(F_t,G)-s_{x_i}(F_t,\kappa)-\delta_{it}\right|.
\end{align*}
Then,~\eqref{CLT-proof-B5} yields that $\sqrt n B_n =o_p(1)$ in that case as well.
\end{proof}
\begin{Remark}
If $\var s_x = 0$, we have $\sqrt{n}A_n \to_p 0$ and the leading term is $B_n$. Therefore, we do not converge to a Normal, but rather towards a mass at 0. However,~\citep[Theorem~15.6]{DasGupta2008asymptotic} applies on the $U$-statistic component of $B_n$, and we conjecture that $nB_n$ tends in distribution to a mixture of $\chi_2$ distributions. \Citet[Theorem 1.1.c]{hladky2019} shows that this conjecture holds at least for cliques. Proving this conjecture would require tighter bounds in~\eqref{CLT-proof-B2} specific to this regime.
\end{Remark}

\newcommand{\methodo}{\ref{Method}}
\section{Material for Section \protect\methodo}
\label{metho}
\renewcommand{\theequation}{C.\arabic{equation}}
\subsection{Estimating $\kappa$}\label{kappahat}
We use maximum likelihood to estimate $\kappa$. As doing so requires to perform an optimization over the space of measure preserving transformations of $[0,1]$, we considered several approaches:~\cite{olhede2013network,lei2015,chatterjee2015,sarkar2015}, as well as \cref{alg:kappa}. Our experiments show that \cref{alg:kappa} performed best on synthetic data, while~\cite{olhede2013network} performed best on the dataset we studied. Thus, in \cref{Method}, we only use~\cite{olhede2013network}\footnote{Using~\citet{olhede2013network}'s code: {\tt github.com/p-wolfe/network-histogram-code}.}. Furthermore, in using~\cite{olhede2013network}, we made the following augmentations: i) the approach relies on a rule of thumb to determine the number of blocks $\hat b$ to use, we considered all number of blocks in $[.9\hat b, 1.1\hat b]$; ii) for each considered number of blocks, as the algorithm is randomized, we perform the estimation $10$ times and kept the estimate with the maximal likelihood; iii) when comparing likelihoods from estimates with different number of blocks, as the model is then parametric, we use information criterions to correct for the difference in number of parameters (here AIC).

\setlength{\textfloatsep}{.9\baselineskip}
\begin{algorithm}
	\SetAlgoLined
	\KwInput{A graph $G$}
	\KwOutput{A kernel estimate $\hat\kappa$ and block assignments $\{P_b\}_{b\in[k]}$}
	{Use Louvain algorithm to organize $G$'s vertices in blocks. Call $k$ the ensuing number of blocks. For each $b\in[k]$, let $P_b$ be the set of vertices in block labeled $b$\;}
	{Set $\hat B\in [0,1]^{k\times k}$, such that $\forall(b,b')\in[k]^2,\ \hat B_{bb'} ={\#\{ij\in G\,:\, i\in P_{b\vphantom{b'}}, j\in P_{b'}\}}/{|P_{b\vphantom{b'}}|(|P_{b'}|-\II{\{b=b'\}})}$\;}
	{Set $\hat\sigma: (0,1) \to [k]$, such that $\forall u\in(0,1),\ \hat\sigma(u) = \sum_{b\in[k]}\II{\big\{u>\sum_{b'\leq b} |P_{b'}|/n\big\}}$\;}
	{Set $\hat\kappa: (0,1)^2\to [0,1]$, such that $\forall u,v\in(0,1)^2,\ \hat\kappa(u,v) = \hat B_{\hat\sigma(u),\hat\sigma(v)}$\;
	}
\caption{\label{alg:kappa}Block-kernel estimator.}
\end{algorithm}

\subsection{Producing the $\hat t_i$-s}
From \cref{motif-density-limit} we know that if $(G_n)_{n>0}\sim G(\rho,\kappa)$, and vertex $i$ has latent position $x$, then there is a sequence $\Sigma_x(n,\kappa)$ such that
\begin{equation*}
t_i(G_n,x,\kappa)
	= \Sigma_x(n,\kappa)^{-1/2}\big(
			s_i(\TexttriangleR\ ,G_n)-s_x(\TexttriangleR\ ,\kappa),
			s_i(\TextsquareR\ ,G_n)-s_x(\TextsquareR\ ,\kappa)
		\big)
\end{equation*}
is asymptotically bivariate standard normal. We estimate $s_x(\TexttriangleR\ ,\kappa)$, $s_x(\TextsquareR\ ,\kappa)$, and $\Sigma_x(n,\kappa)$ using bootstrap. In doing so, we observe that since $\hat\kappa$ is a step-function, the densities are step-function themselves, and we may average our estimates across blocks, as well as index them by block label rather than $x$. See \cref{alg:covs}. We call $\hat t_i(G)$ the estimator of $t_i$ resulting from using \cref{kappahat} and \cref{alg:covs}. We observe through simulation that the $\hat t_i$-s are already close to normal for $n=5000$ (see \cref{fig:QQ}.)

\begin{Remark}
The parametric bootstrap is computationally expensive, as it requires to count subgraphs in each replicate. However, as $\Sigma_x(n,\rho\kappa)$ depends on the decay rate of $\rho$, computing $\Sigma_x(n,\rho\kappa)$ directly would require assuming one such rate. The parametric bootstrap avoids this assumption. However, averaging across vertices in each block, as in \cref{alg:covs}, improves precision by a factor of order $1/\sqrt{n}$, and only few replicates are sufficient. An alternative is to evaluate the moments directly though $\mathcal{H}_F$ and associated $s_x(H,\kappa)$, as in \cref{var-closed-form} and~\cite{maugis2017network}. In that case, given $\mathcal{H}_F$, the computational complexity will be that of evaluating $k^{2|F|-1}$ integrals over $[0,1]^{2|F|}$.
\end{Remark}

\setlength{\textfloatsep}{.9\baselineskip}
\begin{algorithm}
	\SetAlgoLined
	\KwInput{A blockmodel $\kappa$, a network size $n$, and a number of replicates $N$}
	\KwOutput{$\hat s_b(\TexttriangleR\ ,\kappa)$, $\hat s_b(\TextsquareR\ ,\kappa)$ and $\widehat\Sigma_b(n,\kappa)$ for $b\in[k]$}
	{Set $k$ to be the number of blocks in $\kappa$\;}
	{For $i\in[n]$, set $x_i = i/(n+1)$, and for $b\in[k]$ let $P_b$ be the set of vertices in block $b$\;}
	{Generate $G_1, \dots, G_N$, $N$ independent realizations of $G(n,\kappa)\vert \{x_i\}_{i\in[n]}$\;}
	{Compute the $s_i(\TexttriangleR\ ,G_t)$ and $s_i(\TextsquareR\ ,G_t)$ for $i\in[n]$ and $t\in[N]$\;}
	{Compute for each $b\in[k]$ $\begin{cases}
		\hat s_b(\TexttriangleR\ ,\hat\kappa) &= \frac{1}{N}\sum_{t=1}^N
			\frac{1}{|P_b|}\sum_{i\in P_b} s_i(\TexttriangleR\ ,G_t),\\
		\hat s_b(\TextsquareR\ ,\hat\kappa) &= \frac{1}{N}\sum_{t=1}^N
			\frac{1}{|P_b|}\sum_{i\in P_b} s_i(\TextsquareR\ ,G_t),\\
		\widehat \Sigma_b(n,\hat\kappa) &= \frac{1}{N-1}\sum_{t=1}^N
			\frac{1}{|P_b|}\sum_{i\in P_b}
			\big(s_i(\TexttriangleR\ ,G_t)-\hat s_b(\TexttriangleR\ ,\hat\kappa)\big)
			\big(s_i(\TextsquareR\ ,G_t)-\hat s_b(\TextsquareR\ ,\hat\kappa)\big);
	\end{cases}$}
\caption{\label{alg:covs} Estimator of rooted count distribution parameter.}
\end{algorithm}

\subsection{Critical value}\label{cval}
Our proofs imply that the convergence of \cref{motif-density-limit} is not uniform: higher order moments, while converging at the same rate, exhibit exponentially larger constant factors. Thus, extremal quantiles cannot be expected to have converged when performing analysis, which we confirm through simulation in \cref{fig:QQ}. We note here that~\cite{sileikis2019} suggests that convergence may become uniform for dense enough random graph sequences $G$.

We therefore adapt the critical value of the test, which we do through bootstrap. Specifically, while for a given level $\alpha$ and using Bonferroni correction the asymptotic critical value is $\smash{F^{-1}_{{}^{\chi^2(2)}}}(1-\alpha/n)$, we instead use $(1-\alpha)$-th quantile of $\max_{i\in [n], r\in[R]} \{\hat t_i(G_r)\}$ (where the $\{G_r\}_{r\in[R]}$ are random graph replicates.) We present in \cref{fig:QQ} an example of how the critical value is augmented. Finally, using simulation we confirm that doing so preserves both the level and power of the test.

\begin{figure}[t]
\centering
\includegraphics[width=.325\textwidth]{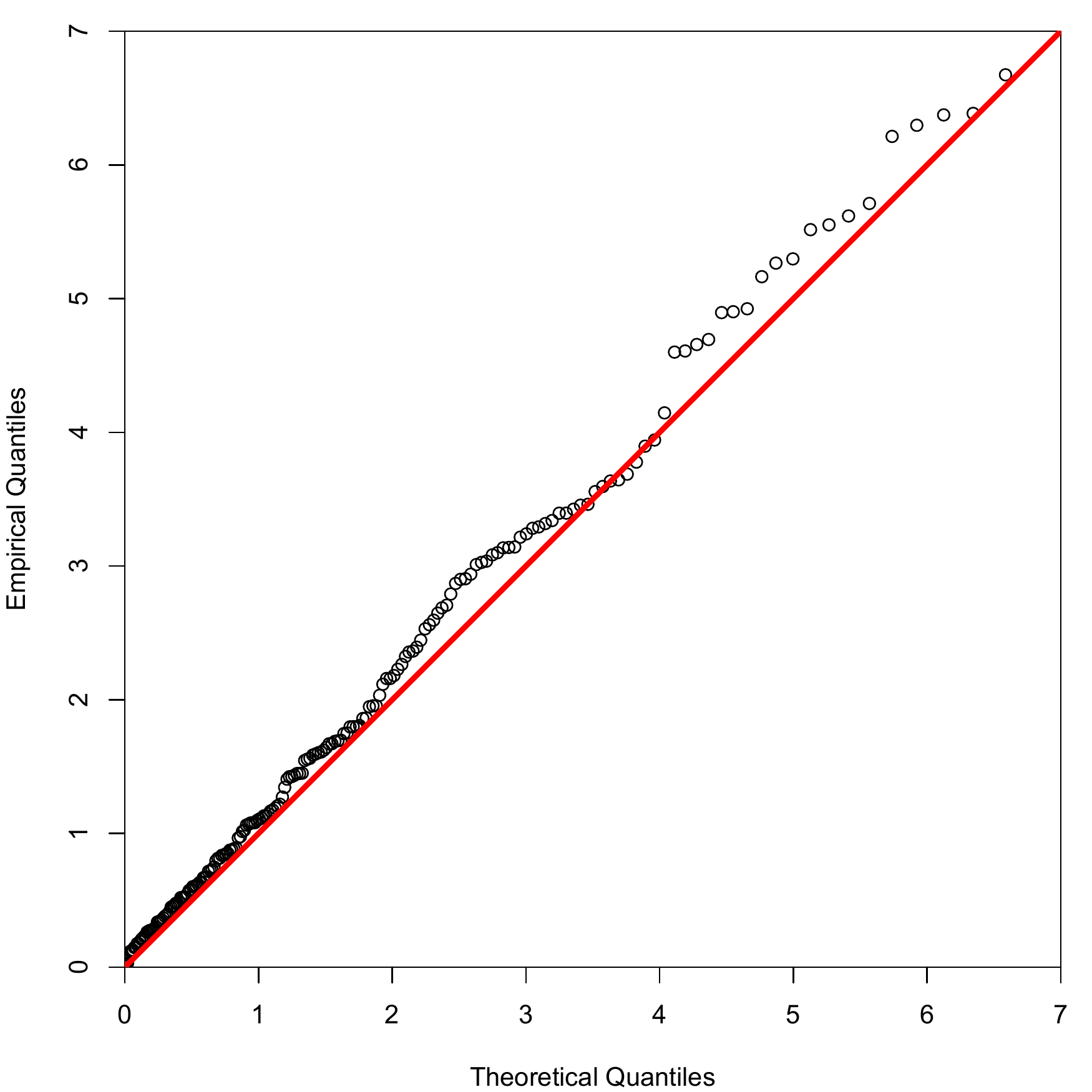}
\includegraphics[width=.325\textwidth]{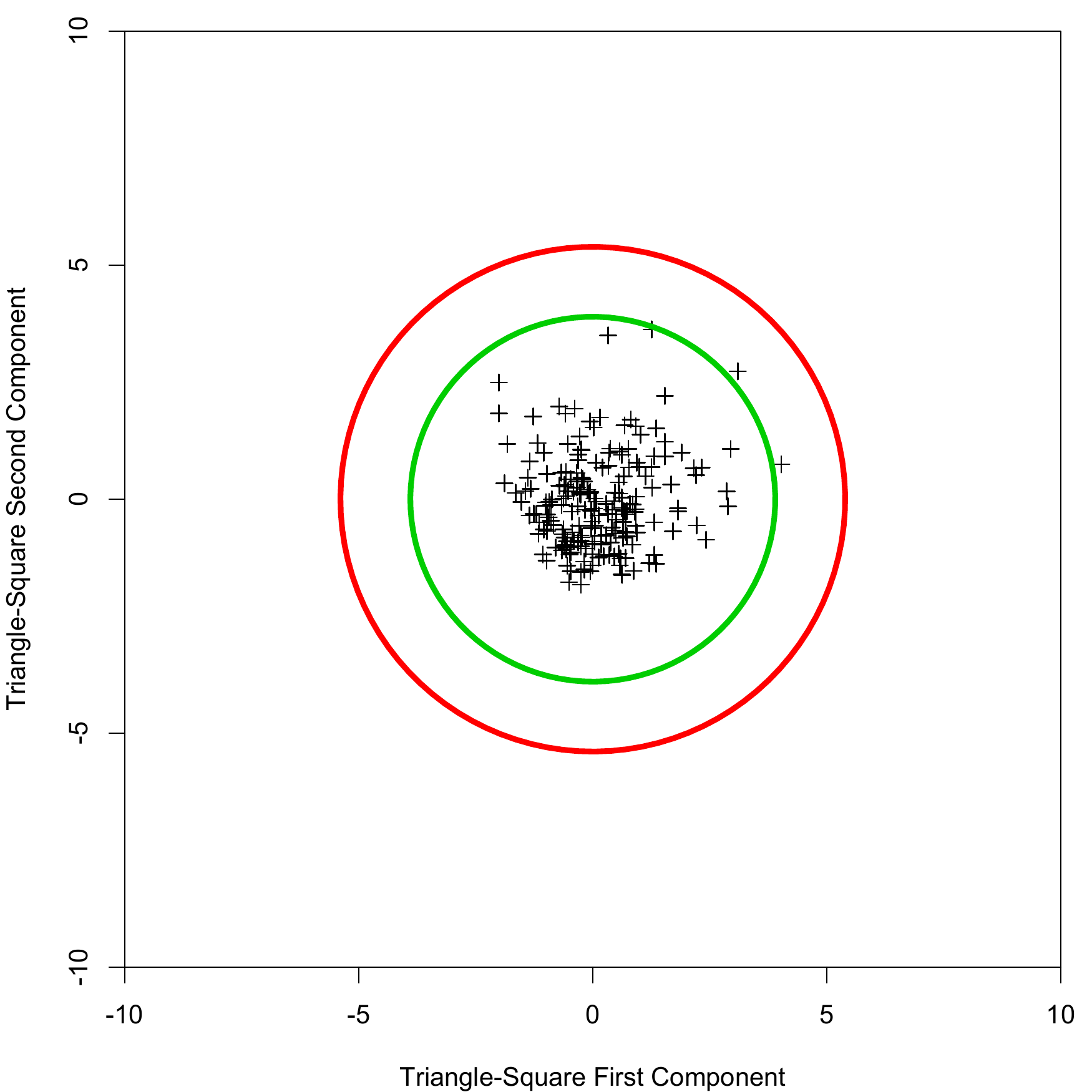}
\includegraphics[width=.325\textwidth]{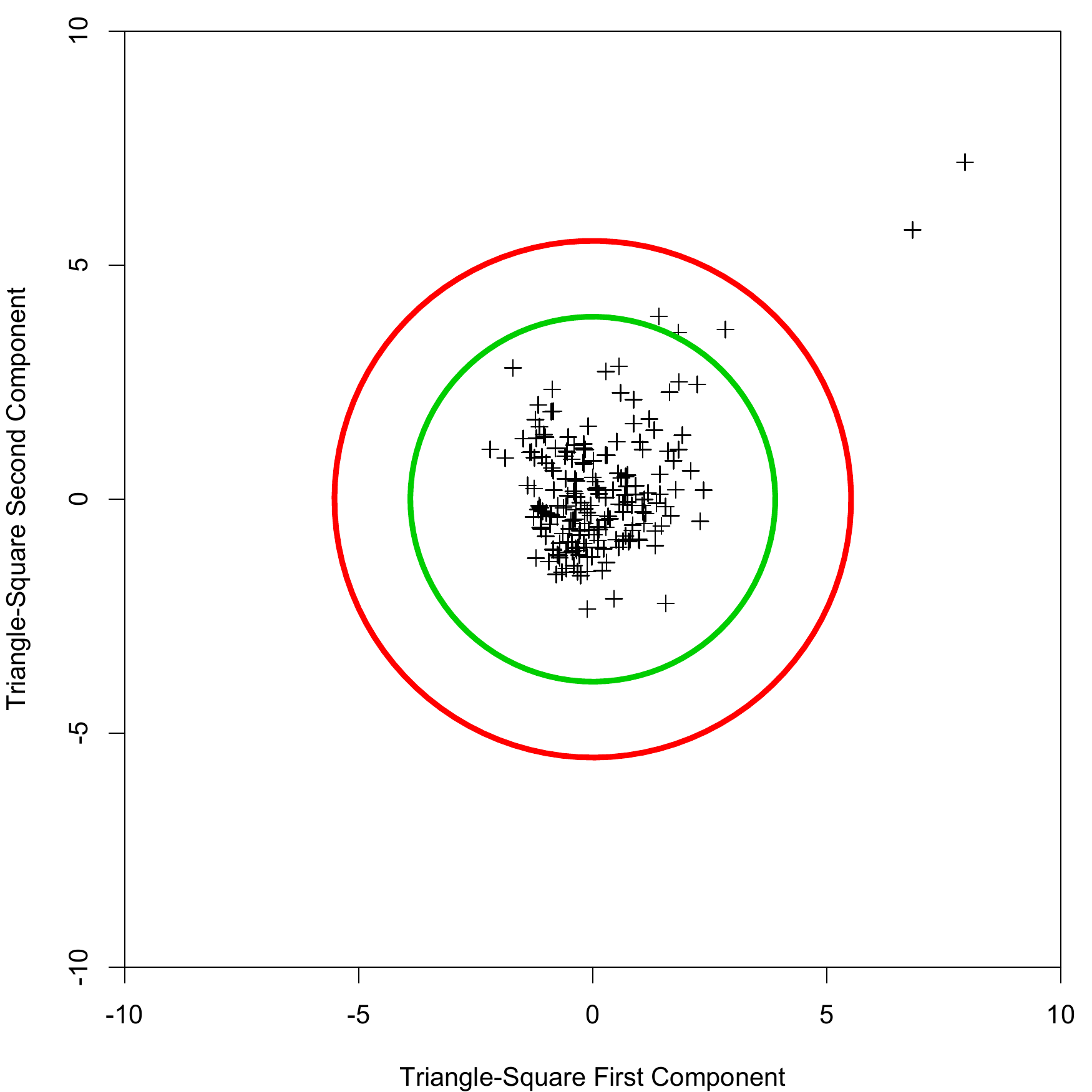}
\caption{Simulation experiments. %
\textit{\textbf{Left:}} QQ-Plot of realized distribution of standardized rooted counts against theoretical distribution. We simulate $\{G_l\}_{l\in[200]}$, each of order $n=5000$ from $\kappa$ a block model with $3$ blocks of equal sizes and sparsity $\rho=\smash{n^{{}_{-1/3}}}$. From $\{G_l\}_{l\in[200]}$ we produce the $\smash{\{t_1(G_l)\}_{l\in[200]}}$. We present the QQ-plot of $\smash{\{\|\hat t_1(G_l)\|_2^2\}_l}$ against the $\smash{\chi^{_2}}(2)$ distribution. We do not observe significant departure from the $\chi^{_{2}}(2)$ distribution.
\textit{\textbf{Center:}} Exemplar movement of critical value. We use the same set-up as in the leftmost plot, but with $n=200$. Crosses are the $\{\hat t_i(G_1)\}_{i\in[200]}$. The green circle is the asymptotic critical value ($\smash{F^{_{-1}}_{{}^{\chi^{_{2}}}}(2)}(1-\alpha/n)$), in red is the estimated one. With the estimated critical value, we recover the selected level of the test; i.e., we perform the test on $G_1$ to $G_{200}$, and we cannot reject the null that the rejection probability is $\alpha$.
\textit{\textbf{Right:}} Triadic closure and test power. We use the same set-up as in the central plot. Then, we enforce $5\%$ triadic closure on $G_1$ (we closed $5\%$ of the opened triangles attached to $5\%$ of the vertices.) The plot is structured as the central one, and we observe two points outside the confidence zone. The observed rejection rate across graph replicates is not significantly different from $1$.}
\label{fig:QQ}
\end{figure}

\section{Proof of~\cref{first-order}}
\label{proof-of-first-order}
In what follows, we deconstruct~\cref{first-order} in three results that we prove successively. This allows us to put forward one key intermediate result: an homomorphism property for rooted counts. This property mirrors results that have proved central to the study of subgraph counts at the global scale (see~\citet[Equation~2,~p964]{lovasz2006rank},~\citet[Equation~1,~p217]{bollobas2009metric} and~\citet[Equation~7.7,~p117]{lovasz2012large}.)

\begin{Lemma}\phantomsection\label{nb-copy-order}
Fix a rooted graph $F$ and let $G\sim G(n,\rho\kappa)|x_i=x$. Then
\begin{align}
\label{nb-copy-order-1}
\E X_F&=s_x(F,\kappa)\,\rho^{e(F)}X_F(K_n,i),\\
\label{nb-copy-order-2}
X_F&=O_p\left(n^{|F|-1}\rho^{e(F)}\right).
\end{align}
where $s_x(F,\kappa) = \E \big[\prod_{pq\in F}\kappa(x_p,x_q)\,\vert\, x_i=x\big]$.
\end{Lemma}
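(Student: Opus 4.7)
}
The plan is to compute $\E X_F$ directly by linearity of expectation and then deduce the stochastic bound via Markov's inequality. First I would write $X_F = \sum_{F'} \II\{F'\subset G\}$, where the sum ranges over all labelled rooted copies $F'\subset K_n$ with $[F',i]\equiv F$ (there are exactly $X_F(K_n,i)=(n-1)_{|F|-1}/\aut(F)$ such copies). By linearity, $\E X_F = \sum_{F'} \P[F'\subset G]$, so the first claim reduces to showing that the probability of any single such copy being present equals $s_x(F,\kappa)\,\rho^{e(F)}$, independently of which labelling $F'$ one picks.

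For a fixed $F'$ with vertex set $\{i\}\cup\{p_1,\dots,p_{|F|-1}\}$, I would condition on the latent positions $(x_{p_1},\dots,x_{p_{|F|-1}})$ together with the enforced value $x_i=x$. Given these positions, edges appear independently across pairs under \cref{model}, so
\[
\P\bigl[F'\subset G \,\big|\, x_i,x_{p_1},\dots,x_{p_{|F|-1}}\bigr]
= \prod_{pq\in F'} \rho\,\kappa(x_p,x_q)
= \rho^{e(F)}\prod_{pq\in F'}\kappa(x_p,x_q).
\]
Integrating over the i.i.d.\ $\operatorname{Uniform}([0,1])$ distribution of the non-root latents yields
\[
\P[F'\subset G]
= \rho^{e(F)}\,\E\!\left[\prod_{pq\in F'}\kappa(x_p,x_q)\,\Big|\,x_i=x\right]
= \rho^{e(F)}\,s_x(F,\kappa),
\]
where the last equality follows because $\kappa$ is symmetric and the non-root latents are exchangeable, so the expectation depends on $F'$ only through its rooted isomorphism class. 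Summing over the $X_F(K_n,i)$ copies yields the first claim.

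For the second claim, the bound $\kappa\le 1$ gives $s_x(F,\kappa)\le 1$, hence $\E X_F \le \rho^{e(F)} X_F(K_n,i) = O\bigl(n^{|F|-1}\rho^{e(F)}\bigr)$, and Markov's inequality immediately delivers $X_F = O_p\bigl(n^{|F|-1}\rho^{e(F)}\bigr)$.

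The computation has no real obstacle; the only point requiring a moment of care is the exchangeability argument that makes $\P[F'\subset G]$ the same for every labelled copy $F'$ in the isomorphism class, which is what allows the sum to factor cleanly into the number of copies times a single probability.
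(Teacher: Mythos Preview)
Your proposal is correct and follows essentially the same approach as the paper: both write $X_F$ as a sum of indicators over labelled rooted copies, compute the probability of a single copy by conditioning on the non-root latents and using the conditional edge independence in \cref{model}, observe that this probability is the same for every copy in the rooted isomorphism class, and then invoke Markov's inequality for the $O_p$ bound. The only cosmetic difference is that the paper appeals to $s_x(F,\kappa)>0$ to state a two-sided $\Theta$ estimate before applying Markov, whereas you use $\kappa\le1$ to get the upper bound directly; both are valid and yield the same conclusion.
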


The $s_x(F,\kappa)$ notation in consistent with the literature: it uses the `$s$' of~\cite{bollobas2009metric} to denote an embedding density, and the `$x$' subscript of~\cite{lovasz2014automorphism} to denote the density of a partially labelled graph.

\begin{proof}[Proof of~\cref{nb-copy-order}]
We prove both statements in succession. We start by evaluating the mean. To do so, we write the number of copies of $F$ rooted at $i$ in $G$ as follows:
\begin{equation}\label{nb-of-copies}
X_F = \#\{F'\subset G : i\in F', [F',i]\equiv F\} = \sum_{
\substack{i\in F'\subset K_n\\ [F',i]\equiv F}}\II{\{F'\subset G\}}.
\end{equation}
We now exploit the conditional independence of edge occurrences, through the law of iterated expectation, to compute $\smash{\E\II{\{F'\subset G\}}}$. We obtain that for any $[F',i]\equiv F$,
\begin{align}
\nonumber
\E\II{\{F'\subset G\}}
&= \E \prod_{pq\in F'}\II\{pq\in G\}\\
\nonumber
&= \E\left[\E \left[\,\prod_{pq\in F'}\II\{pq\in G\}\,\Big\vert\, \{x_t\}_{t\in F'\setminus i}\,\right]\right]\\
&= \E \left[\,\prod_{pq\in F'}\rho\kappa(x_p,x_q)\,\Big\vert\, x_i=x\,\right]
\label{nb-of-copies-2}
 = s_x(F,\kappa)\rho^{e(F)}.
\end{align}
In this last expression, we observe that $\E\smash{\II{\{F'\subset G\}}}$ does not depend on the chosen copy $F'$, but is a function of $x$, $F$, $\kappa$ and $\rho$. Therefore, from~\eqref{nb-of-copies} and~\eqref{nb-of-copies-2}, using the linearity of the expectation operator we obtain that 
\[
\E X_F = s_x(F,\kappa) \rho^{e(F)}X_F(K_n,i),
\]
which recovers~\eqref{nb-copy-order-1}, and therefore the first half of the sought-after result.

We now consider the bound in probability of~\eqref{nb-copy-order-2}. First, denoting $\aut(F)$ the cardinality of the root preserving automorphism group of $F$ as in~\cref{defn:rooted-count}, we have
\[
X_F(K_n,i) 
= \frac{(n)_{|F|-1}}{\aut(F)}=\Theta\left(n^{|F|-1}\right),
\]
as $\aut(F)$ is fixed and greater than $0$. Then, as $s_x(F,\kappa)>0$ by Definition~1, $\E X_F = \Theta\big(n^{|F|-1}\rho^{e(F)}\big)$, and we may fix $M>0$ and $n_0$ such that
\[
\forall n\geq n_0,\quad\E X_F \leq Mn^{|F|-1}\rho^{e(F)}.
\]
Finally, since $0\leq X_F$ for all $n$, we can use Markov's inequality and obtain that for any $\epsilon<1$,
\begin{equation*}
\forall n\geq n_0,\quad
\P\left[\frac{X_F}{n^{|F|-1}\rho^{e(F)}} \geq \epsilon^{-1}M\right] \leq \frac{\E X_F(G,i)/n^{|F|-1}\rho^{e(F)}}{\epsilon^{-1}M}\leq\epsilon,
\end{equation*}
which is equivalent to $X_F = O_p\big(n^{|F|-1}\rho^{e(F)}\big)$, the sought-after result.
\end{proof}

The key consequence of~\cref{nb-copy-order} is that the order of magnitude of $X_F$ is driven solely by the number of vertices and edges in $F$, paralleling results presented~\cite{erdos59:_on_random_graphs} and~\cite{rucinski1988small} for the Erd\H{o}s-R\'enyi model. This translates into set operations on rooted graphs presenting good properties in terms of order or magnitude of number of rooted copies. We present two such properties in the following propositions.

\begin{Proposition}\phantomsection\label{motif-set-operations-p}
Set $F_1,F_2$ to be two rooted graphs sharing their root and let $G\sim G(n,\rho\kappa)|x_i=x$. Then, $X_{F_1\cup F_2}
= O_p\big(\E X_{F_1}\E X_{F_2}/\E X_{F_1\cap F_2}\big)$, and if $F_1\subset F_2$, $\P[X_{F_2}>0] = O\big(\E X_{F_1}\big)$.
\end{Proposition}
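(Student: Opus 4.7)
The plan is to read both claims off the first-order computation of Lemma~\ref{nb-copy-order}, combined with inclusion--exclusion on vertex/edge counts and elementary monotonicity arguments.

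For the first claim, I would begin by noting the combinatorial identities
\[
|F_1\cup F_2| - 1 = (|F_1|-1) + (|F_2|-1) - (|F_1\cap F_2|-1), \qquad e(F_1\cup F_2) = e(F_1)+e(F_2)-e(F_1\cap F_2),
\]
both of which follow from the assumption that $F_1,F_2$ share their root (so $F_1\cap F_2$ is non-empty and also rooted). Combining these with the formula $\E X_F = s_x(F,\kappa)\,\rho^{e(F)}X_F(K_n,i)$ of \cref{nb-copy-order} and the elementary identity $X_F(K_n,i) = (n-1)_{|F|-1}/\aut(F) = \Theta(n^{|F|-1})$, we obtain
\[
\E X_{F_1\cup F_2} = \Theta\!\left(n^{|F_1\cup F_2|-1}\rho^{e(F_1\cup F_2)}\right) = \Theta\!\left(\frac{\E X_{F_1}\,\E X_{F_2}}{\E X_{F_1\cap F_2}}\right),
\]
where the implicit constant only involves a ratio of bounded kernel densities $s_x(\cdot,\kappa)$ (which are strictly positive because $\kappa$ is). Applying Markov's inequality exactly as in the derivation of~\eqref{nb-copy-order-2} in the proof of \cref{nb-copy-order} then yields $X_{F_1\cup F_2}=O_p(\E X_{F_1\cup F_2})$, which is the desired bound.

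For the second claim, the argument is purely monotone: if $F_1\subset F_2$ (as labelled rooted graphs sharing their root), then any rooted copy of $F_2$ in $G$ contains a rooted copy of $F_1$ at the same root, so the event $\{X_{F_2}>0\}$ is contained in the event $\{X_{F_1}>0\}$. Markov's inequality on the non-negative integer-valued random variable $X_{F_1}$ gives $\P[X_{F_1}>0]\leq \E X_{F_1}$, hence
\[
\P[X_{F_2}>0] \leq \P[X_{F_1}>0] \leq \E X_{F_1},
\]
which is the stated conclusion.

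I do not foresee a genuine obstacle here: both statements reduce to the first-moment computation already established in \cref{nb-copy-order} together with simple combinatorics on $|F|$ and $e(F)$. The only point that requires a line of care is making sure the kernel factor $s_x(F_1\cup F_2,\kappa)/\big(s_x(F_1,\kappa)s_x(F_2,\kappa)/s_x(F_1\cap F_2,\kappa)\big)$ is $O(1)$; this is immediate since $\kappa$ is bounded and the conditional expectations defining $s_x$ are strictly positive under \cref{model}, so all ratios remain in a compact subset of $(0,\infty)$.
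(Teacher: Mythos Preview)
Your proposal is correct and follows essentially the same route as the paper's proof: for the first claim, both you and the paper use the inclusion--exclusion identities on $|F|$ and $e(F)$ together with the first-moment formula from \cref{nb-copy-order} and Markov's inequality (the paper invokes the $O_p$ bound~\eqref{nb-copy-order-2} directly rather than recomputing $\E X_{F_1\cup F_2}$, but this is the same argument); for the second claim, your monotonicity-plus-Markov argument is exactly what the paper does.
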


\begin{proof}
We prove both results in succession. Let us begin with the first assertion, and write using~\cref{nb-copy-order} and the classical equality $|F_1\cup F_2| = |F_1|+|F_2|-|F_1\cap F_2|$,
\begin{align*}
X_{F_1\cup F_2}
&= O_p\left(n^{(|F_1| + |F_2| - |F_1\cap F_2|)-1}\rho^{e(F_1)+e(F_2)-e(F_1\cap F_2)}\right)\\
&= O_p\left(\frac{n^{|F_1|-1}\rho^{e(F_1)}n^{|F_2|-1}\rho^{e(F_2)}}{n^{|F_1\cap F_2|-1}\rho^{e(F_1\cap F_2)}}\right)\\
&= O_p\left(\frac{\E X_{F_1}\E X_{F_2}}{\E X_{F_1\cap F_2}}\right),
\end{align*}
which is the sought-after result.

We now turn to the second assertion and first write that $\P[X_{F_2}>0]=\P[X_{F_2}\geq 1]$. Then, since $F_1\subset F_2$, if there is a copy of $F_2$, there must also be one of $F_1$, so that $\P[X_{F_2}\geq1]\leq\P[X_{F_1}\geq 1]$. Then, using Markov's inequality, we obtain $\P[X_{F_2}>0]
	\leq \P[X_{F_1}\geq 1]
	\leq \E X_{F_1}$,
which is sufficient to obtain the sought-after result.
\end{proof}

\cref{motif-set-operations-p} shows that if any rooted subgraph of $F$ has a vanishing expected number of rooted copies, then $X_F=0$ with high probability (generalizing~\citet[Lemma~1]{rucinski1986balanced}). Conversely,~\cref{motif-set-operations-p} also shows that if all subgraphs of both $F_1$ and $F_2$ have diverging expected number of rooted copies, then $X_{F_1\cup F_2} = o_p(\E X_{F_1}\!\E X_{F_2})$ when $|F_1\cap F_2|>1$.

To characterize when all subgraphs have diverging expected number of rooted copies, we use the parameter $m(F)$:
\[
m(F) = \max\left\{{e(H)}/{|H|-1}\,:\,v\in H\subset F, |H|>1\right\}.
\]
Now observe that for each $v\in H\subset F$, $n^{|H|-1}\rho^{e(H)}=\omega(1)$ if and only if $n\rho^{e(H)/(|H|-1)}=\omega(1)$. Furthermore, as $\rho\leq 1$, we have that $n\rho^{e(H)/(|H|-1)}\geq n\rho^{m(F)}$. Therefore, similarly as in~\citet[Lemma~1]{rucinski1986balanced}, we observe a type of zero-one law: if $n\rho^{m(F)}=o(1)$, then $X_F$ is almost surely equal to zero, while if $n\rho^{\max(m(F_1),m(F_2))}=\omega(1)$, then the number of rooted copies of elements of $\mathcal{H}_{F_1,F_2}$ is dominated by $\E X_{F_1}\!\E X_{F_2}$.

These results, in conjunction with~\cref{rooted-copies-product}, will now allow us to prove that rooted graph counts verify a partial homomorphism property (the product of rooted subgraph counts approximately maps to the rooted subgraph count of their gluing product; see definition just below and~\cref{c-H-examples} for examples.) Ultimately, this implies the concentration of each count to its expectation for $\rho$ large enough. This result generalizes~\citet[Equation~2,~p964]{lovasz2006rank},~\citet[Equation~1,~p217]{bollobas2009metric} and~\citet[Equation~7.7,~p117]{lovasz2012large}.

\newcommand{\CiteLoc}{\cite[Section~4.2]{lovasz2012large}}
\begin{Definition}[Gluing Product~\protect{\CiteLoc{}}]\phantomsection\label{gluing-product}
We call gluing product of two rooted graphs $F_1,F_2$---and write $F_1F_2$---the rooted graph obtained by taking the disjoint union of $F_1$ and $F_2$, and then merging the two root vertices. We write $F_1^k$ for the $k$-fold gluing product of $F_1$ with itself.
\end{Definition}

\begin{Proposition}\phantomsection\label{hom-prop}%
Fix two rooted graphs $F_1, F_2$. Set $G\!\sim\!G(n,\rho\kappa)|x_i\!=x$ and $\epsilon_F\!=\!1/\min\!\big\{n^{|H|-1}\rho^{e(H)}\!: v\!\in\!H\!\subset\!F\big\}$. Then, $\epsilon_F=O\big(1/n\rho^{m(F)}\big)$ and
\[\begin{cases}
X_{F_1}X_{F_2} = c_{F_1F_2}X_{F_1F_2}
	\left(1+O_p(\epsilon_{F_1F_2})\right)
	&\text{if $n\rho^{m(F_1F_2)}=\omega(1)$,}\\
X_{F_1}/\E X_{F_1} = 
	1+O_p(\epsilon_{F_1})
	&\text{if $n\rho^{m(F_1)}=\omega(1)$.}
\end{cases}\]
\end{Proposition}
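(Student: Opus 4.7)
My plan is to prove the preliminary estimate, then the concentration statement, and then to use the concentration to deduce the gluing-product identity. Throughout, I combine the algebraic identity of \cref{rooted-copies-product} with the first-order estimates of \cref{nb-copy-order}, and exploit that for any fixed pair $F_1, F_2$ the index set $\mathcal{H}_{F_1, F_2}$ is finite, so pointwise order-of-magnitude bounds suffice.

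The preliminary bound $\epsilon_F = O(1/n\rho^{m(F)})$ is immediate: for any $v \in H \subset F$ with $|H| > 1$, $e(H)/(|H|-1) \leq m(F)$ and $\rho \in (0,1)$ give $n^{|H|-1}\rho^{e(H)} = (n\rho^{e(H)/(|H|-1)})^{|H|-1} \geq n\rho^{m(F)}$ once $n\rho^{m(F)}\geq 1$, which is the regime of interest.

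For the concentration of $X_{F_1}$, I would apply \cref{rooted-copies-product} with the two arguments equal to $F_1$ to obtain $X_{F_1}^2 = \sum_{H \in \mathcal{H}_{F_1,F_1}} c_H X_H$, and then take expectations. The main term is $c_{F_1^2}\E X_{F_1^2}$: a falling-factorial computation using $c_{F_1^2} = \aut(F_1^2)/\aut(F_1)^2$ (valid since $F_1^2$ is composed of two copies of $F_1$ sharing only the root) together with the factorization $s_x(F_1^2,\kappa) = s_x(F_1,\kappa)^2$ (the two non-root vertex-sets are disjoint, so their latent variables are independent) yields $c_{F_1^2}\E X_{F_1^2} = (\E X_{F_1})^2(1 + O(1/n))$. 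Every other $H \in \mathcal{H}_{F_1,F_1}$ arises from two copies of $F_1$ overlapping along some $J \subset F_1$ with $v \in J$ and $|J| \geq 2$; by \cref{nb-copy-order}, $\E X_H = \Theta((\E X_{F_1})^2/\E X_J)$, and summing the finitely many such terms yields $\var X_{F_1} = O((\E X_{F_1})^2\,\epsilon_{F_1})$. Chebyshev's inequality then provides the concentration of $X_{F_1}/\E X_{F_1}$ around $1$, with a higher-moment Markov refinement via \cref{cross-mom} if the displayed rate requires it.

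For the gluing-product identity, \cref{rooted-copies-product} gives $X_{F_1}X_{F_2} = c_{F_1F_2}X_{F_1F_2} + \sum_{H \in \mathcal{H}_{F_1,F_2}\setminus\{F_1F_2\}} c_H X_H$. In each error term, the two rooted copies share vertices beyond the root along some $J$ with $|J|\geq 2$, so by \cref{nb-copy-order}, $X_H = O_p(\E X_H) = O_p(\E X_{F_1F_2}/\E X_J) = O_p(\epsilon_{F_1F_2}\E X_{F_1F_2})$. Applying the concentration step (already proved) to $F_1F_2$, valid under the hypothesis $n\rho^{m(F_1F_2)} = \omega(1)$, yields $\E X_{F_1F_2} = \Theta_p(X_{F_1F_2})$, so the finitely many error terms combine into the claimed multiplicative $1+O_p(\epsilon_{F_1F_2})$ correction to $c_{F_1F_2}X_{F_1F_2}$. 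The main obstacle is the falling-factorial bookkeeping in the variance step: the automorphism factor $c_{F_1^2}$ must cancel the $\aut(F_1)^{-2}$ buried in $(\E X_{F_1})^2$ with an $O(1/n)$ residual, so that the cancellation does not leave a term comparable to the overlap contributions $\E X_H$ with $H\neq F_1^2$, which would otherwise spoil the sharper variance bound.
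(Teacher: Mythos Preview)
Your proposal is correct and uses the same ingredients as the paper: \cref{rooted-copies-product} to expand products, \cref{nb-copy-order} to size each summand, the falling-factorial identity $c_{F_1F_2}\E X_{F_1F_2}=(1+O(1/n))\,\E X_{F_1}\E X_{F_2}$, and Chebyshev for concentration. The one structural difference is the order: the paper first establishes the gluing-product relation and then specializes to $F_2=F_1$ to get the variance bound and concentration, whereas you prove concentration first and then invoke it (for $F_1F_2$) to convert the additive error $O_p(\epsilon_{F_1F_2}\E X_{F_1F_2})$ into the multiplicative $1+O_p(\epsilon_{F_1F_2})$. Your ordering is arguably cleaner, since the passage from an additive remainder in $\E X_{F_1F_2}$ to a multiplicative one in $X_{F_1F_2}$ does need $X_{F_1F_2}=\Theta_p(\E X_{F_1F_2})$; the paper leaves that step implicit. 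One caution: your fallback ``higher-moment Markov refinement via \cref{cross-mom}'' would be circular, as \cref{cross-mom} is proved using \cref{first-order}, of which the present proposition is a constituent; the paper, like you, stops at Chebyshev.
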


\begin{proof}
We expand $X_{F_1}X_{F_2}$ and then obtain the two claimed statements by identifying the leading terms in the obtained expression. We start from~\cref{rooted-copies-product}, that we recall for the reader's convenience,
\begin{equation*}
X_{F_1}X_{F_2} = \sum_{H\in\mathcal{H}_{F_1,F_2}} c_HX_H.
\end{equation*}
Then, we use~\cref{gluing-product} to split the sum in two terms as follows:
\begin{equation}\label{hom-prop-eq1}
X_{F_1}X_{F_2}
 = c_{F_1F_2}X_{F_1F_2}
 +\sum_{H\in\mathcal{H}_{F_1,F_2}\setminus\{F_1F_2\}} c_HX_H.
\end{equation}

We now use~\eqref{hom-prop-eq1} to prove that $X_{F_1}X_{F_2} = c_{F_1F_2}X_{F_1F_2}\big(1+O_p(\epsilon_{F_1F_2})\big)$. To control the sum on the right, we first fix $H\in\mathcal{H}_{F_1,F_2}\setminus\{F_1F_2\}$ and two rooted graphs $F_1'$ and $F_2'$ such that $H = F_1'\cup F_2'$. Then, we note that by construction $\max(m(F_1),m(F_2)) = m(F_1F_2)$. Therefore, the assumption that $n\rho^{m(F_1F_2)}=\omega(1)$ along with~\cref{motif-set-operations-p} directly implies that
\begin{equation}\label{hom-prop-eq2}
X_H
= O_p\left(\frac{\E X_{F_1'}\!\E X_{F_2'}}{\E X_{F_1'\cap F_2'}}\right)
= O_p\left(\epsilon_{F_1F_2}\E X_{F_1}\!\E X_{F_2}\right).
\end{equation}
Now, recall that $X_F(K_n) = (n)_{|F|-1}/\aut(F)$, with $(n)_k = n(n-1)\cdots(n-k+1)$ the falling factorial, so that from~\cref{nb-copy-order} we obtain
\begin{align*}
\E X_{F_1}\E X_{F_2}
&=\left(\frac{\left(n\right)_{|F_1|-1}}{\aut(F_1)}
		\rho^{e(F_1)}s_x(F_1,\kappa)\right)
	\left(\frac{\left(n\right)_{|F_2|-1}}{\aut(F_2)}
		\rho^{e(F_2)}s_x(F_2,\kappa)\right)\\
&=\frac{n^{|F_1|+|F_2|-2}}{\aut(F_1)\aut(F_2)}
		\rho^{e(F_1)+e(F_2)}\left(1+O\left(n^{-1}\right)\right)\\
&\qquad\quad
	\E\left[\,
	\prod_{pq\in F_1}\kappa(x_p,x_q)
	\,\Big\vert\, x_i=x\,\right]
	\E\left[\,
	\prod_{pq\in F_2}\kappa(x_p,x_q)
	\,\Big\vert\, x_i=x\,\right].
\end{align*}
We now use the definition of the gluing product---especially that by construction $|F_1F_2|=|F_1|+|F_2|-1$ and $e(F_1F_2)=e(F_1)+e(F_2)$---to factorize the two expectations, and obtain
\begin{align}
\nonumber
\E X_{F_1}\!\E X_{F_2}\!
&= \frac{n^{|F_1F_2|-1}}{\aut(F_1)\aut(F_2)}
		\rho^{e(F_1F_2)}\left(1+O\left(n^{-1}\right)\right)\\
\nonumber
&\qquad\qquad\qquad
	\E\left[\,
	\prod_{pq\in F_1F_2}\kappa(x_p,x_q)
	\,\Big\vert\, x_i=x
	\,\right]\\
\nonumber
&= \frac{\aut(F_1F_2)}{\aut(F_1)\aut(F_2)}
		\frac{(n)_{|F_1F_2|-1}}{\aut(F_1F_2)}\rho^{e(F_1F_2)}
		s_x(F_1F_2,\kappa)\left(1+O\left(n^{-1}\right)\right)\\
\nonumber
&= \frac{\aut(F_1F_2)}{\aut(F_1)\aut(F_2)}
		s_x(F_1F_2,\kappa)\rho^{e(F_1F_2)}X_F(K_n)
		\left(1+O\left(n^{-1}\right)\right)\\
\label{hom-prop-eq3}
&=c_{F_1F_2}\E X_{F_1F_2}\left(1+O\left(n^{-1}\right)\right).
\end{align}
Together~\eqref{hom-prop-eq1},~\eqref{hom-prop-eq2} and~\eqref{hom-prop-eq3} imply that
\begin{equation}\label{hom-prop-eq4}
X_{F_1}X_{F_2} = c_{F_1F_2}X_{F_1F_2}\big(1+O_p(\epsilon_{F_1F_2})\big),
\end{equation}
which is the sought-after result.

We now use~\eqref{hom-prop-eq4} to prove that $X_{F_1}=\E X_{F_1}\big(1+O_p(\epsilon_{F_1})\big)$. We do so using the second moment method. We begin by replacing $F_2$ by $F_1$ in both~\eqref{hom-prop-eq3} and~\eqref{hom-prop-eq4}, to obtain
\begin{multline*}\begin{dcases}
\E X_{F_1}\E X_{F_1}
	= c_{F_1F_1}\E X_{F_1F_1}\big(1+O(n^{-1})\big)
	\vphantom{\left(\E X_{F_1}\right)^2}\\
X_{F_1}X_{F_1} = c_{F_1F_1}X_{F_1F_1}\big(1+O_p(\epsilon_{F_1})\big)
	\vphantom{o_p\left(\E X_{F_1^2}\right)}
\end{dcases}
\Rightarrow\\
\begin{dcases}
\left(\E X_{F_1}\right)^2=c_{F_1F_1}\E X_{F_1^2}\big(1+O(n^{-1})\big)\\
X_{F_1}^2 = c_{F_1F_1}X_{F_1^2}\big(1+O_p(\epsilon_{F_1})\big).
\end{dcases}\end{multline*}
Therefore, we have
\begin{equation}\label{var-of-X}
\var\left(X_{F_1}\right)
= \E X_{F_1}^2-\left(\E X_{F_1}\right)^2\\
= O\left(\epsilon_{F_1}\big(\E X_{F_1}\big)^2\right).
\end{equation}
Then, since by~\cref{nb-copy-order} $\E X_{F_1}>0$, we can use Chebyshev's inequality to obtain that for any $\epsilon>0$,
\[
\P\left[\left|\frac{X_{F_1}}{\E X_{F_1}}-1\right|\geq\epsilon\right] \leq \epsilon^{-2}\frac{\var\left(X_{F_1}\right)}{\left(\E X_{F_1}\right)^2} = O\left(\E\epsilon_{F_1}\right) = O\big(1/n\rho^{m(F_1)}\big) = o(1).
\]
Therefore ${X_{F_1}}/{\E X_{F_1}}\to_p1$, which is the sought-after result.
\end{proof}

\cref{hom-prop} fully describes the distributional properties of rooted counts in the limit of large inhomogeneous random graphs: Rooted counts concentrate to their means, and their interactions concentrate to the corresponding gluing product. Further, these concentrations take place uniformly in $x$, and at least at rate $n\rho^{m(F)}$. This parallels known results for the total, unrooted count in Erd\H{o}s-R\'enyi random graphs~\citep{rucinski1988small}.

\section{Proof of~\cref{motif-density-limit}}
\label{proof-motif-density-limit-SI}

Here we address the multivariate case, resuming from the end of the proof of~\cref{motif-density-limit} in~\cref{proof-motif-density-limit}. We consider $F = (F_1,\dots,F_k)$ a $k$-tuple of rooted graphs and still call $s_i(F,G)$ the corresponding $k$-variate vector of densities. From~\eqref{thm1-mom-conv} we know that each entry of $s_i(F,G)$ is asymptotically Normal. Remains to show that the joint distribution is also Normal. To do so, we will show convergence in moments, since it is here also sufficient to conclude~\cite{kleiber2013moment}.

Fix $r = (r_1,\dots,r_k)\in\mathbb{N}^k$. By the same arguments as above, and crucially by~\cref{cross-mom}, we have with $\Sigma$ the correlation matrix of $s_i(F,G)$ (which exists because $s_i(F,G)$ is bounded with high probability) and $[rk]$ the multiset where each $t\in[k]$ is repeated $r_t$ times, 
\[
\begin{cases}
\text{if $\textstyle{\sum_t}r_t$ is even,}&
\E \prod_{t=1}^k \left[\frac{s_i(F,G)- \E s_i(F,G)}{\sqrt{\var(s_i(F,G))}}\right]^{r_t}\to
	\sum_{s\in[rk]^{(2)}}
	\prod_{\{p,q\}\in s}\Sigma_{pq},\\[.3cm]
\text{otherwise,}&
\E \prod_{t=1}^k \left[\frac{s_i(F,G)- \E s_i(F,G)}{\sqrt{\var(s_i(F,G))}}\right]^{r_t}\to0.
\end{cases} 
\]
Thus, $s_i(F,G)$ converges in moments to $\mathrm{Normal}(0,\Sigma)$, which completes the proof.

\section{Application of~\cref{motif-density-clt}}
\label{uniform-motif-density-clt}
In \cref{ml-estimation} we claim that:

{\emph{\cref{motif-density-clt} shows that when using $(y_i, s_i(F,G))_i$ to fit a probabilistic model, statistical inference proceeds as if one were using the i.i.d. sample $(y_i, s_{x_i}(F,\kappa))_i$. To see this, fix a smooth log-likelihood function $l_\theta(\cdot,\cdot)$, and set
\[
\hat\theta_G=\arg\!\max_\theta\sum_il_\theta(Y_i, s_i(F,G)),\quad \hat\theta_\kappa=\arg\!\max_\theta\sum_il_\theta(Y_i, s_{x_i}(F,\kappa)).
\]
Then, \cref{motif-density-clt} shows that $\hat\theta_G$ and $\hat\theta_\kappa$ converge at the same rate and toward the same value. Further, the asymptotic variance of $\hat\theta_G$ may be computed in the standard way, using the Fisher information matrix.}}

These claim hold if \cref{motif-density-clt} holds uniformly in $\theta$, or at least in a neighborhood of some $\theta_0$, where $\theta_0$ is the true parameter. As is transparent from the proof of \cref{motif-density-clt}, the bound depend only on $n$ and $K$. Therefore, for this convergence to occur we need there to be a finite constant $K$ such that for all $\theta$, the bivariate maps $l_\theta(\cdot,\cdot)$ and $\partial l_\theta/\partial\theta\big(\cdot,\cdot)$ are $K$-Lipschitz in the neighborhood of $\theta_0$.

\section{Extensions to~\cref{motif-density-clt}}
\label{thm_extensions}
Here we present a natural extension to~\cref{motif-density-clt}, showing uniform convergence to the normal distribution for the sum of subgraph counts. We will only present the result and proof for the univariate case, and the counts themselves, but an extension to the same setting as~\cref{motif-density-clt} would follow using the same proof technique.

This following result is surprising in that one would have expected a standard Berry-Esseen theorem to hold, with concentration at the speed of $\sqrt{n}$, while we observe another, larger, error term in $\epsilon_n$. The cause of this additional term is the dependence between rooted counts. The reasons why this terms is so large is because the intuition that higher order dependence terms---terms of the form $\E X^pY^q$ for two variables $X$ and $Y$ and $p,q>0$---are small proves to be, for rooted counts, wrong. Indeed, for rooted counts, these higher order cross moments remain purely driven by $\epsilon_n$, as they remain for all order explained by the overlap between two copies.

\begin{Proposition}[Berry-Esseen type result for sum of rooted densities]
\label{berry-eseen}
With $F$ a rooted graph, $G\sim G(\rho,\kappa)$ such that $\E s_x(F,\kappa)>0$ and $S_n=\sum_{i\in[n]}(X_F(G_n, i)-\E X_F)$. Then, if $\epsilon_n = \max\big\{1/n^{{}_{|H|-1}}\rho^{{}_{\,e(H)}}: H\subset F\big\}=o(1)$, we have that:
\[
\sup_t\left|F_{n^{-1/2}S_n}(t) - F_{Normal(0,\sigma)}(t)\right| = O\big(\sqrt{\epsilon_n}\,\big).
\]
\end{Proposition}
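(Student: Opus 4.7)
The approach is to extend the moment-expansion machinery of Theorem~\ref{cross-mom} to the sum $S_n$, and then convert moment agreement into Kolmogorov-distance agreement via a Stein's-method or Esseen-type argument. First, I would compute the $k$-th moment of $n^{-1/2}S_n$. Expanding,
\[
\E S_n^k=\sum_{(i_1,\dots,i_k)\in[n]^k}\E\prod_{t=1}^k\big(X_F(G_n,i_t)-\E X_F\big),
\]
each cross-moment on the right can be analyzed using the same three-step program as in the proof of Theorem~\ref{cross-mom}: expand each factor as a sum of centered indicators, group the resulting terms by the unlabelled (now multi-rooted) graph $H=\bigcup_t F_{tm_t}$ they form, and show that only configurations where the $F_{tm_t}$ pair up contribute at leading order. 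The modifications needed are that the root vertices differ across factors (so that vertex-disjoint copies contribute zero by independence, while copies sharing non-root vertices must be tracked as in \cref{overlapping-copies}), and that an $O(1/n)$ fraction of tuples have collisions $i_t=i_p$, which are absorbed into the error.

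Summing over tuples then yields, with $\sigma_n^2=\var(n^{-1/2}S_n)$,
\[
\E(n^{-1/2}S_n)^k = (k-1)!!\,\sigma_n^k\bigl(1+O(\epsilon_n)\bigr)\ \text{($k$ even)},\qquad \E(n^{-1/2}S_n)^k=O(\epsilon_n)\sigma_n^k\ \text{($k$ odd)},
\]
mirroring the identity~\eqref{thm1-mom-conv} that drove the CLT at a single vertex. The variance $\sigma_n$ stabilizes to the constant $\sigma$ of the statement via the covariance formula of Remark~\ref{var-closed-form} evaluated at both coinciding and distinct roots.

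The second step is to convert these quantitative moment statements into a Kolmogorov-distance bound at rate $\sqrt{\epsilon_n}$. A clean route is Stein's method for exchangeable pairs: construct $(W,W')$ from $W=n^{-1/2}S_n$ by re-randomizing all edges incident to a uniformly chosen vertex, verify the linearity-of-regression condition, and apply the Chen--Shao exchangeable-pairs Berry--Esseen bound. The requisite error terms---$\var\E[(W-W')^2\,|\,W]$, $\E|W-W'|^3$, and the remainder from the regression condition---are all controlled by cross-moments of the form analyzed in the first step, and the $\sqrt{\epsilon_n}$ rate emerges from the square-root loss inherent in converting second-moment control of the coupling into distributional control. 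An alternative route using Esseen's smoothing inequality directly on the characteristic function---expanding $\varphi_W-\varphi_{N(0,\sigma^2)}$ via the moment estimates and balancing the truncation $T$---should yield the same rate.

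The main obstacle is the first step: redoing the combinatorics of Theorem~\ref{cross-mom} with multi-rooted configurations (since the factor roots $i_1,\dots,i_k$ may all differ), while preserving the $O(\epsilon_n)$ relative error after summation over the $n^k$ tuples. In particular, the hypergraph argument that rules out configurations satisfying $(\ast)$ but not $(\ast\ast)$ must be adapted to the multi-rooted setting, and contributions from degenerate tuples---those with collisions among the $i_t$, or with one root's copies disjoint from the rest---must be carefully isolated and shown to be subdominant. Once this combinatorial control is in place, the Stein-method conclusion is routine.
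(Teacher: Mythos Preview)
Your moment-expansion plan is sound and would likely succeed, but the paper takes a shorter route that sidesteps exactly the obstacle you flag as the main difficulty. Rather than redoing the combinatorics of Theorem~\ref{cross-mom} in a multi-rooted setting, the paper groups the terms of $\E S_n^k$ by the multi-index $(k_1,\dots,k_n)$ with $\sum_i k_i=k$ and argues that copies rooted at \emph{distinct} vertices overlap only with probability $O(\epsilon_n/n)$ per pair. Hence, when every $k_i\neq 1$, the cross-moment factorises up to a $(1+O(\epsilon_n/n))$ error into $\prod_i \E(X_F(G_n,i)-\E X_F)^{k_i}$; when some $k_i=1$, the concentration bound of \cref{hom-prop} gives the term a size $O((n\epsilon_n)^{k/2})$. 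Summing, $\E S_n^k$ matches the $k$-th moment of an i.i.d.\ sum $S_n'=\sum_i Z_i$ (with $Z_i$ independent copies of $X_F-\E X_F$) up to these two error scales, and the paper then reads off the Kolmogorov bound by comparing characteristic functions and invoking the classical Berry--Esseen theorem on $S_n'$. The $\sqrt{\epsilon_n}$ rate comes directly from the $O((n\epsilon_n)^{k/2})$ terms after normalising by $n^{k/2}$, not from a square-root loss in a coupling argument.

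So the two differences are: (i) the paper reduces to an i.i.d.\ comparison instead of rebuilding the pairing argument for multi-rooted $H$, and (ii) it works through the characteristic function rather than Stein's method. Your approach is more systematic and might generalise more cleanly (e.g.\ to the full setting of \cref{motif-density-clt} with Lipschitz $f$), but it requires the genuinely harder multi-rooted version of the $(\ast)/(\ast\ast)$ hypergraph argument; the paper's shortcut buys you that for free. Your Esseen-smoothing alternative is essentially what the paper does; the Stein exchangeable-pairs route would be new and would need the regression-linearity condition verified, which is not immediate for this coupling.
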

\begin{proof}
As in the proof of~\cref{motif-density-clt}, we proceed by splitting $S_n$ into two components. However, in this case, it is more convenient to do so implicitly through factorising the characteristic function. Here we note that we shall rely on elements of proofs of earlier results in this document, and therefore will not provide as much details as for the earlier proofs.

Let us first write the characteristic function of $S_n/\sqrt{n}$, which we call $\phi_n$:
\[
\phi_n(t)=\E e^{iS_nt/\sqrt{n}}=\sum_{k\geq0}\frac{(it/\sqrt{n})^k}{k!}\E S_n^k.
\]
Quite naturally following our earlier proofs, we focus on the moments, and write:
\[
\E S_n^k=\sum_{0\leq k_1\leq\ldots\leq k_n\leq k\,;\,\sum_i k_i=k}
	\E\prod_{i\in[n]}\big(X_F(G_n, i) - \E X_F\big)^{k_i}.
\]
We now focus on how the dependence between the $X_F(G_n, i)$ inflate these terms, and at what speed this increase in dependence shrinks with $n$ (for this reason we shall not keep dependence in $k$ in the asymptotic analysis to follow.) There we observe that by the proof of~\cref{cross-mom}, $\big(X_F(G_n, i) - \E X_F\big)^{k_i}$ can be though of as the count of $k_i$ overlapping, copies of $F$ rooted at $i$. Then, the probability that any of the $k_i$ copies of $F$ rooted at $i$ overlaps with any of the $k_j$ copies of $F$ rooted at any $j\neq i$ is of order $k_i(\sum_{j\neq i} k_j)\epsilon_n/n$ (using the same logic as in the paragraph after~\eqref{CLT-proof-B1} in the proof of ~\cref{motif-density-clt}; note that $k_i(\sum_{j\neq i} k_j)\epsilon_n\to_n 0$, so we need not control for the case where this probability is larger than $1$.) As any additional overlap would lead to further negligible terms, by summing across $i$, we obtain that if $\forall i, k_i\neq 1$
\[
\E\prod_{i\in[n]}\big(X_F(G_n, i) - \E X_F\big)^{k_i}=\big(1+O(\epsilon_n/n)\big)
	\prod_{i\in[n]}\E \big(X_F(G_n, i) - \E X_F\big)^{k_i},
\]
else, since by~\cref{hom-prop}, $X_F(G_n, i) = \big(1+O_p(\epsilon_n/n)\big) \E X_F$, we have that
\[
(\epsilon_n/n)^{-1/2}\big(X_F(G_n, i) - \E X_F\big) = O(1)
\]
and therefore
\[
\E\prod_{i\in[n]}\big(X_F(G_n, i) - \E X_F\big)^{k_i} = O\big((\epsilon_n/n)^{k/2}\big).
\]
It follows that with~$S_n'$ distributed at the sum of~$n$ independent and centred copies of~$X_F$, and since there is order~$n^k$ terms where at least one~$k_i$ equals~$1$, we have:
\[
\phi_n(t)=\sum_{k>}\frac{(it/\sqrt{n})^k}{k!}\left(\big(1+O(\epsilon_n/n)\big)\E S_n'^k + O\big((n\epsilon_n)^{k/2}\big)\right).
\]
We therefore recover three terms, one which is the standard one of a sum of $i.i.d.$ random variables, one bounded by $\epsilon_n/n$, and one bounded by $\sqrt{\epsilon_n}$. Then, expressing the distribution function as the Fourier transform of the characteristic function, addressing the first term using the Berry-Esseen theorem, we recover:
\[
\sup_t\left|F_{n^{-1/2}S_n}(t) - F_{Normal(0,\sigma)}(t)\right| = O\left(n^{-1/2} + \sqrt{\epsilon_n}\right).
\]
We obtain the result notting that for any $\rho$, $n^{-1/2}=O(\epsilon_n)$.
\end{proof}

\begin{Remark}[Donsker extension to~\cref{motif-density-clt}]
\Cref{berry-eseen} implies that in large sparse kernel based random graphs the empirical process of ours statistic is not tight, so that a Donsker type extensions (as in convergence of the interpolated partial sums to a Brownian bridge) does not hold. For dense graphs, as in $\rho\to c\in (0,1)$, then $\epsilon_n$ is of order $n^{-1}$, and the sequence is tight, so that a Donsker extensions would hold. This is consistent with the intuition that the dependence between rooted counts increases with sparsity. However, it is interesting to observe that any degree of sparsity breaks this convergence. 
\end{Remark}

\begin{Remark}[Relation to~\cite{barbour2019}]
\Citet{barbour2019} presents a result related to~\cref{motif-density-clt}. In~\cite{barbour2019} averages across vertices of functions of vertex's neighborhood are found to be normal in a special case of our null (degree based models, which corresponds to rank one kernels) and in the case of a very sparse graph ($n\rho\to c>0).$) There we observe that in such a sparse setting, all but a finite number of neighborhoods will be trees, and that for $F$ a rooted tree,~\cref{motif-density-clt} holds only for sequences $\rho$ such that $n\rho\to\infty$.Thus,~\cite{barbour2019} shows that the behavior~\cref{motif-density-clt} persists even in sparser regimes. From~\cite[Theorem~1]{BickelLevina2012} we knew the global count to be Normal in the very sparse regime of $n\rho\to c>0$ (with an inflated variance compared to the denser cases.) 
\end{Remark}


\section*{Acknowledgements}
I gratefully acknowledge that the authors of~\cite{olhede2013network} kindly shared the dataset used in this work, and provided comments on early versions of this document. Discussions with members of UCL Statistical Science Department and C. J. Priebe helped improve this document. I thank anonymous referees for their constructive comments, the Isaac Newton Institute for Mathematical Sciences, Cambridge, for support and hospitality during the program Theoretical Foundations for Statistical Network Analysis (EPSRC grant no. EP/K032208/1) where part of this work was undertaken. This work was supported in part by RELx, through a partnership with the UCL Big Data Institute, and by the NCSML PDRA Challenge Best Article prize. Any remaining mistakes are mine alone.

\bibliographystyle{plainnat}
\bibliography{LNA_bib}
\end{document}